\theoremstyle{definition}
\newtheorem{Def}{Definition}[section]
\theoremstyle{remark}
\newtheorem{Rem}[Def]{Remark}
\newtheorem{Ex}[Def]{Example}
\newtheorem{Nota}[Def]{Notation}
\theoremstyle{plain}
\newtheorem{Th}[Def]{Theorem}
\newtheorem{Prop}[Def]{Proposition}
\newtheorem{Lem}[Def]{Lemma}
\newtheorem{Cor}[Def]{Corollary}
\newtheorem{Fact}[Def]{Fact}
\newcommand{\Z}{\mathbb{Z}}
\newcommand{\N}{\mathbb{N}}
\newcommand{\C}{\mathbb{C}}
\newcommand{\CI}{\mathcal{I}}
\newcommand{\CM}{\mathcal{M}}
\newcommand{\tp}{{\ }^{t}}
\newcommand{\al}{\alpha }
\newcommand{\be}{\beta }
\newcommand{\ga}{\gamma }
\newcommand{\Ga}{\Gamma }
\newcommand{\De}{\Delta }
\newcommand{\vep}{\varepsilon }
\newcommand{\la}{\lambda }
\newcommand{\pa}{\partial }
\newcommand{\ot}{\otimes }
\newcommand{\one}{\mathbf{1}}
\newcommand{\zero}{\mathbf{0}}
\newcommand{\tM}{\widetilde{M}}
\newcommand{\tH}{\widetilde{H}}
\newcommand{\tF}{\widetilde{F}}
\newcommand{\tG}{\widetilde{G}}
\newcommand{\bfF}{\boldsymbol{F}}
\newcommand{\tbfF}{\widetilde{\boldsymbol{F}}}
\newcommand{\bfw}{\boldsymbol{w}}
\newcommand{\bff}{\boldsymbol{f}}
\newcommand{\bfg}{\boldsymbol{g}}
\newcommand{\ev}{\boldsymbol{e}_{1,\ldots,1}}
\newcommand{\ztwom}{ \{ 0,1 \}^m }
\newcommand{\bfv}{\boldsymbol{v}}
\newcommand{\sol}{Sol_{\dot{x}}}
\newcommand{\bfh}{\boldsymbol{h}}
\title[Irreducibility of the monodromy of $F_C$]
{Irreducibility of the monodromy representation 
of Lauricella's $F_C$
}
\author[Y. Goto]{Yoshiaki Goto}
\address[Goto]{
  General Education,
  Otaru University of Commerce,
  Otaru 047-8501, Japan
}
\email{goto@res.otaru-uc.ac.jp}
\author[K. Matsumoto]{Keiji Matsumoto}
\address[Matsumoto]{
Department of Mathematics,
Hokkaido University,
Sapporo 060-0810, Japan
}
\email{matsu@math.sci.hokudai.ac.jp}
\keywords{
Monodromy representation, 
Hypergeometric functions, 
Lauricella's $F_C$. 
}
\subjclass[2010]{33C65, 32S40.}
\date{\today}
\begin{document}
\maketitle

\begin{abstract}
Let $E_C$ be the hypergeometric system of differential equations satisfied by 
Lauricella's hypergeometric series $F_C$ of $m$ variables. 
We improve a fundamental system of solutions to $E_C$ expressed
in terms of $F_C$ so that it is valid even in cases where parameters
satisfy some integral conditions.
We show that the monodromy representation of $E_C$ is irreducible under 
our assumption consisting of $2^{m+1}$ conditions for parameters. 
We also show  that the monodromy representation is reducible if 
one of them is not satisfied.

\end{abstract}

\section{Introduction}\label{section-intro}
Lauricella's hypergeometric series $F_C$ of $m$ variables $x_1 ,\ldots ,x_m$ 
with complex parameters $a$, $b$, $c_1$, $\ldots$, $c_m$ is defined by 
\begin{align*}
 F_C (a,b,c ;x ) 
 =\sum_{n_1 ,\ldots ,n_m =0} ^{\infty } 
 \frac{(a,n_1 +\cdots +n_m )(b,n_1 +\cdots +n_m )}
 {(c_1 ,n_1 )\cdots (c_m ,n_m ) n_1 ! \cdots n_m !} x_1 ^{n_1} \cdots x_m ^{n_m} ,
\end{align*}
where $x=(x_1 ,\ldots ,x_m),\ c=(c_1 ,\ldots ,c_m)$, 
$c_1 ,\ldots ,c_m \not\in \{ 0,-1,-2,\ldots \}$, and $(c_1 ,n_1)=\Gamma (c_1+n_1)/\Gamma (c_1)$. 
This series converges in the domain 
$$
D_C =\left\{ (x_1 ,\ldots ,x_m ) \in \C ^m  \ \middle| \ \sum _{k=1} ^{m} \sqrt{|x_k|} <1  \right\} .
$$
It is shown in \cite{HT} that the hypergeometric system $E_C=E_C (a,b,c)$ 
of differential equations satisfied by  $F_C (a,b,c;x)$ is a holonomic system 
of rank $2^m$ with the singular locus 
\begin{align*}
  &S= \Big( \prod_{k=1}^m x_{k} \cdot R(x)=0 \Big) \subset \C^m ,\\
  &R(x_1 ,\ldots ,x_m)=\prod_{\vep_1 ,\ldots ,\vep_m =\pm 1} 
  \Big( 1+\sum_{k=1}^m \vep_k \sqrt{x_k} \Big), 
\end{align*}
and that the system $E_C$ is 
irreducible (in the sense of $D$-modules)
if and only if 
\begin{align}
  \label{irred-1}
  a-\sum_{k=1}^m i_k c_k ,\quad b-\sum_{k=1}^m i_k c_k \not\in \Z ,\qquad 
  \forall I=(i_1,\dots,i_m)\in \ztwom.
\end{align}
It is classically known that 
there are $2^m$ solutions to $E_C (a,b,c)$ 
expressed in terms of $F_C$ with different parameters(see (\ref{series-sol})).  
If parameters satisfy (\ref{irred-1}) and $c_1 ,\ldots ,c_m \not\in \Z$ 
then they form a fundamental system of solutions to $E_C (a,b,c)$ 
in a simply connected domain in $D_C -S$.


Let $X$ be the complement of the singular locus $S$. 
The fundamental group of $X$ is generated by $m+1$ loops 
$\rho_0 ,\ \rho_1 ,\ldots ,\ \rho_m$ (see \S \ref{section-MR}).
In \cite{G-FC-monodromy}, 
we express the circuit transformations 
$\CM_i$ along $\rho_i$ $(i=0,\dots , m)$ 
by using the $2^m$ solutions and the intersection form on 
twisted homology groups associated with 
Euler-type integrals of solutions to $E_C$. 
These expressions are independent of the choice of a basis 
of the twisted homology group. The circuit transformations $\CM_i$ are 
also studied in \cite{M} by the specification of the intersection form 
regarded as indeterminate.

In this paper, we show the following.  
\begin{Th}[Main theorem]
  The monodromy representation 
  $$
  \CM : \pi_1 (X,\dot{x}) \to GL(\sol ) 
  $$
  is irreducible under the assumption (\ref{irred-1}), 
  where $\sol =\sol (a,b,c)$ is the local solution space to $E_C(a,b,c)$ 
  around a point $\dot{x}\in D_C -S$. 
\end{Th}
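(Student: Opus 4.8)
\emph{Proof strategy.}
The plan is to combine the explicit expressions for the circuit transformations $\CM_0,\CM_1,\dots,\CM_m$ obtained in \cite{G-FC-monodromy} (see also \cite{M}) with a short representation-theoretic argument. Recall that $E_C(a,b,c)$ admits the $2^m$ series solutions $f_I$ indexed by $I=(i_1,\dots,i_m)\in\ztwom$, of the form $f_I(x)=\prod_{k=1}^m x_k^{i_k(1-c_k)}\cdot F_C(\dots;x)$ with suitably shifted parameters, forming a basis of $\sol$ when $c_1,\dots,c_m\notin\Z$. First I would extract two structural facts. \emph{(a)} For $k=1,\dots,m$ the loop $\rho_k$ is a small circle around $x_k=0$, along which the $F_C$-factor is single-valued, so $\CM_k f_I=e^{2\pi\sqrt{-1}\,i_k(1-c_k)}f_I$; hence $\CM_1,\dots,\CM_m$ are simultaneously diagonalized by $\{f_I\}$, and as soon as $c_1,\dots,c_m\notin\Z$ their joint eigenvalue tuples $(e^{2\pi\sqrt{-1}\,i_1(1-c_1)},\dots,e^{2\pi\sqrt{-1}\,i_m(1-c_m)})$ are pairwise distinct over $I\in\ztwom$. \emph{(b)} The loop $\rho_0$ around the irreducible hypersurface $R(x)=0$ induces a pseudo-reflection: $\CM_0=\mathrm{id}+\phi(\,\cdot\,)\,v$ for some $v\in\sol$ and $\phi\in\sol^{\vee}$ with $\phi(v)=\lambda_0-1$ and $\lambda_0=e^{2\pi\sqrt{-1}(c_1+\cdots+c_m-a-b)}$ (for $m=1$ this is the classical reflection of ${}_2F_1$ at $x=1$).

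The heart of the argument is the following nonvanishing statement, which is where all $2^{m+1}$ inequalities of (\ref{irred-1}) are consumed: writing $v=\sum_I v_I f_I$ and $\phi_I=\phi(f_I)$, one has $v_I\neq 0$ and $\phi_I\neq 0$ for every $I\in\ztwom$. By the closed formulas of \cite{G-FC-monodromy}, $v_I$ and $\phi_I$ are, up to manifestly nonzero factors, products of terms of the shape $1-e^{2\pi\sqrt{-1}(a-\sum_k j_k c_k)}$ and $1-e^{2\pi\sqrt{-1}(b-\sum_k j_k c_k)}$ for $J=(j_1,\dots,j_m)\in\ztwom$, each of which is nonzero precisely by (\ref{irred-1}).

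Granting this, the theorem follows quickly. Let $W\subseteq\sol$ be a nonzero $\CM$-invariant subspace. Since $W$ is stable under the commuting diagonalizable operators $\CM_1,\dots,\CM_m$, whose joint eigenlines are exactly the coordinate lines $\C f_I$, there is a nonempty $A\subseteq\ztwom$ with $W=\bigoplus_{I\in A}\C f_I$. Applying $\CM_0$, for each $J\in A$ we get $\CM_0 f_J=f_J+\phi_J v\in W$, so the $f_I$-coordinate $\phi_J v_I$ vanishes for every $I\notin A$. If $A\neq\ztwom$, choose $J\in A$ and $I\notin A$; then $\phi_J v_I\neq 0$ by the nonvanishing statement, a contradiction. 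Hence $A=\ztwom$, $W=\sol$, and $\CM$ is irreducible.

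This runs literally only when $c_1,\dots,c_m\notin\Z$; the remaining loci, where logarithmic solutions appear and $\{f_I\}$ degenerates, I would treat by carrying out the same scheme in a parameter-independent twisted-cycle basis, in which $\CM_0$ is still a pseudo-reflection whose reflection data satisfy the nonvanishing statement (by continuity in $(a,b,c)$ of the matrices of \cite{G-FC-monodromy}), while the splitting role of $\CM_1,\dots,\CM_m$ is played by their known semisimple-plus-unipotent eigen-structure. I expect the main obstacle to be exactly the nonvanishing statement --- reading off from \cite{G-FC-monodromy} that the reflection vector and covector of $\CM_0$ have no zero coordinate, and matching these $2^m+2^m$ conditions with (\ref{irred-1}) on the nose; a secondary, bookkeeping-heavy point is making the whole argument uniform over the integral values of $c_1,\dots,c_m$.
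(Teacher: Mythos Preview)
Your argument for the case $c_1,\dots,c_m\notin\Z$ is correct and a bit more direct than the paper's. In the $F_I$ basis one has (Fact~\ref{fact:rep-mat-1}) $M_0=E_{2^m}-c\,\one\,\tp\one\,H$ with $c=(\be-1)(\al-\prod_k\ga_k)/(\al\be)$, so the reflection vector is $\one$ (hence $v_I=1$ for every $I$, trivially nonzero) and the covector has entries $\phi_I=-c\,H_{I,I}$, which are nonzero precisely by (\ref{irred-1}) together with $\ga_k\ne1$. All $2^{m+1}$ conditions therefore live in the $\phi_I$'s rather than being split between $v$ and $\phi$ as you wrote, but that is cosmetic. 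The paper instead works in the parameter-uniform basis $\{\tF_I\}$ and argues in two cases via the intersection form: if $W\not\subset\ker N_0$ one extracts the reflection vector $\ev$ and shows its $\tM_k$-orbit spans (Lemma~\ref{lem-indep}); if $W\subset\ker N_0$ one uses monodromy-invariance and non-degeneracy of $\CI$ (Proposition~\ref{lem-intersection}) to force $W=0$. Your simultaneous-diagonalization step collapses both cases into one, at the price of needing the basis $\{F_I\}$.

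The genuine gap is the passage to $c_k\in\Z$, and neither of your proposed fixes works as stated. First, irreducibility is not closed under limits of representations, so continuity of the matrices $\tM_i$ in the parameters (Corollary~\ref{cor:rep-mat}) tells you nothing at $\ga_k=1$. Second, and more to the point, the nonvanishing statement does \emph{not} transfer to the $\tF_I$ basis: there the reflection vector is $\ev=\tp(0,\dots,0,1)$ (so all but one $v_I$ vanish), and for $I\ne(0,\dots,0)$ the covector entry is proportional to $\al\be+(-1)^{|I|}\prod_k\ga_k^{i_k}$ (Corollary~\ref{cor:rep-mat}), which (\ref{irred-1}) does not prevent from vanishing. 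The paper's cure is exactly the ingredient your scheme discards: the intersection matrix $\tH$ is well-defined and non-degenerate under (\ref{irred-1}) alone, with $\det\tH$ a nonzero multiple of $\prod_I(\al-\prod_k\ga_k^{i_k})(\be-\prod_k\ga_k^{i_k})$, and the orbit determinant in Lemma~\ref{lem-indep} equals $\prod_k\ga_k^{-2^{m-1}}$, which never vanishes. If you want to avoid the intersection form, you must supply a separate argument controlling invariant subspaces of the commuting but non-semisimple $\tM_k$'s at $\ga_k=1$; that is real additional work, not bookkeeping.
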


Note that under the assumption (\ref{irred-1}), for example, 
$c_k$ may be an integer. 
In such a case, the solutions (\ref{series-sol}) expressed by $F_C$ 
do not form a basis of 
the local solution space. 
We give a linear transformation of them 
so that the transformed solutions are valid even in cases where any of 
$c_k$'s are integers.
We construct it inductively on $m$ using tensor products of matrices.

We remark that the irreducibility of the monodromy representation $\CM$ 
is implied from that of the system $E_C$ under the assumption (\ref{irred-1}). 
We prove it explicitly by using properties of the circuit transformations in 
\cite{G-FC-monodromy}, not applying results of $D$-modules.
We here briefly explain our idea of the proof of the main theorem.
It is shown in \cite{G-FC-monodromy} that the $1$-eigenspace $V$ 
of $\CM_0$ is 
$(2^m-1)$-dimensional. Let $f_0\in \sol$ (corresponding to $\ev$ in \S \ref{section-rep-mat}) 
be a non-zero vector in 
its orthogonal complement with respect to the intersection form. 
It is quite easy to give a basis of the whole space $\sol$  
by actions $\CM_1,\dots,\CM_m$ on $f_0$.
Let $W$ be an invariant subspace of $\sol$ under the monodromy 
representation $\CM$. 
If $W\not\subset V$ then we can show $f_0 \in W$, which yields 
that $W$ becomes the whole space $\sol$ by the previous fact.   
Otherwise, we can show that $W$ becomes the zero space 
by the perfectness of the intersection form.  

The irreducibility of $A$-hypergeometric systems is studied in \cite{GKZ}, 
and later in \cite{B} and \cite{SW}. 
Hattori and Takayama \cite{HT} studies 
the irreducibility of the system $E_C(a,b,c)$   
by utilizing results in \cite{B} and \cite{SW} for  
the $A$-hypergeometric systems associated with $F_C$. 
From these results, it seems difficult to know the structure of
an invariant subspace when the system $E_C(a,b,c)$ is reducible.
We show that $E_C(a,b,c)$ is reducible 
if one of the assumption (\ref{irred-1}) is not satisfied, and  
we specify an invariant subspace in $\sol$ under the monodromy 
representation $\CM$ in this case.


\section{Preliminaries}
Except in \S \ref{section-red}, we assume the conditions 
for parameters $a,b,c_1,\dots,c_m$ in (\ref{irred-1}) 
(it is equivalent to (\ref{irred-2}) or (\ref{irred-3}) mentioned below). 

In this section, we collect some facts about Lauricella's $F_C$ 
mentioned in \cite{G-FC}, \cite{G-FC-monodromy}, \cite{HT} and \cite{L}. 
\begin{Nota}
  We put
  \begin{align*}
    \al =\exp (2\pi \sqrt{-1} a),\quad 
    \be =\exp (2\pi \sqrt{-1} b),\quad 
    \ga_k =\exp (2\pi \sqrt{-1} c_k)\ (k=1,\ldots ,m).
  \end{align*}
  We often regard $\al$, $\be$ and $\ga_k$ as indeterminants, 
  and consider the rational function field
  $\C(\al ,\be ,\ga)=\C(\al ,\be ,\ga_1 ,\ldots ,\ga_m)$. 
  For a rational function 
  $g(\al ,\be ,\ga_1 ,\ldots ,\ga_m) \in \C(\al ,\be ,\ga)$, 
  we denote $g(\al ,\be ,\ga_1 ,\ldots ,\ga_m)^{\vee} 
  =g(\al^{-1} ,\be^{-1} ,\ga_1^{-1} ,\ldots ,\ga_m^{-1})$. 
\end{Nota}
Under these notations, the condition (\ref{irred-1}) is equivalent to 
\begin{align}
  \label{irred-2}
  \al-\prod_{k=1}^m \ga_k^{i_k} ,\quad \be-\prod_{k=1}^m \ga_k^{i_k} \neq 0 ,\qquad 
  \forall I=(i_1,\dots,i_m) \in\ztwom.
\end{align}
For example, $\ga_k =1$ or $\al \be -(-1)^{m-1}\prod_{k=1}^m \ga_k =0$ are allowed. 

Note that though in \cite{G-FC-monodromy} the indices $I$ run the subsets of $\{ 1,\dots ,m \}$, 
in this paper we use $\ztwom$ as a set of indices. 
The correspondence is given by 
$$
\{ 1,\dots ,m \} \supset \{ i_1 ,\dots ,i_r \} \longleftrightarrow 
e_{i_1}+\dots+e_{i_r}\in \ztwom ,
$$
where $e_k$ is the $k$-th unit vector of size $m$.
We put $|I|=\sum_{k=1}^m i_k$.

\subsection{System of differential equations}
Let $\pa_k \ (k=1 ,\dots , m)$ be the partial differential operator with respect to $x_k$. 
We set $\theta_k =x_k \pa_k$, $\theta =\sum_{k=1}^m \theta_k$.  
Lauricella's $ F_C (a,b,c;x)$ satisfies differential equations 
$$
\left[ \theta_k (\theta_k+c_k-1)-x_k(\theta +a)(\theta +b)  \right] f(x)=0, 
\quad k=1,\dots , m.
$$ 
The system generated by them is 
called Lauricella's hypergeometric system $E_C (a,b,c)$ of 
differential equations. 
The system $E_{C} (a,b,c)$ is a holonomic system of rank $2^m$ 
with the singular locus $S$. 
It is shown in \cite{HT} that 
the system $E_C(a,b,c)$ is irreducible,
that is, the system $E_{C} (a,b,c)$ defines a maximal ideal in the ring of 
differential operators with rational function coefficients, 
if and only if the parameters $a,b,c_1,\dots,c_m$ satisfy 
(\ref{irred-1}).

For an element $I=(i_1,\dots,i_m)$ of $\ztwom$, we set 
\begin{align}
\label{series-sol}
F_I(x)=\frac{\prod_{k=1}^m\Ga((-1)^{i_k}(1-c_k))}
{\Ga(1-a^I)\Ga(1-b^I)}\cdot \prod_{k=1}^m x_k^{i_k(1-c_k)}
\cdot F_C(a^I,b^I,c^I;x),
\end{align}
where
\begin{align*}
\label{eq:abI}
  &a^I=a+\sum_{k=1}^m i_k(1-c_k),\quad 
  b^I=b+\sum_{k=1}^m i_k(1-c_k), \\
  &c^I=(c_1+2i_1(1-c_1),\dots,c_m+2i_m(1-c_m)).
\end{align*}
Note that the assumption (\ref{irred-1}) is equivalent to
\begin{equation}
\label{irred-3}
a^I,b^I\notin \Z,
\qquad 
  \forall I=(i_1,\dots,i_m)\in \ztwom,
\end{equation}
and that
$$
c_k+2i_k(1-c_k)=\left\{
\begin{matrix} c_k &\textrm{if}& i_k=0,\\
2-c_k &\textrm{if}& i_k=1.\\
\end{matrix}
\right.
$$
\begin{Ex}\label{ex-series-1}
  We give examples for $m=1$ (we put $c_1 =c$, $x_1=x$):
  \begin{align*}
    &F_{0}(x)=\frac{\Ga(1-c)}{\Ga(1-a)\Ga(1-b)}F_C(a,b,c;x)\\
    &=\frac{\Ga(1-c)\Ga(c)}{\Ga(1-a)\Ga(1-b)\Ga(a)\Ga(b)}\sum_{n=0}^\infty
    \dfrac{\Ga(a+n)\Ga(b+n)}{\Ga(c+n)\Ga(1+n)}x^n\\
    &=\frac{\sin(\pi a)\sin(\pi b)}{\pi\sin(\pi c)}\sum_{n=0}^\infty
    \dfrac{\Ga(a+n)\Ga(b+n)}{\Ga(c+n)\Ga(1+n)}x^n,\\
    &F_{1}(x)=\frac{\Ga(c-1)}{\Ga(c-a)\Ga(c-b)}
    x^{1-c}F_C(a+1-c,b+1-c,2-c;x)\\
    &=\frac{\Ga(c\!-\!1)\Ga(1\!-\!(c\!-\!1))}
    {\Ga(c\!-\! a)\Ga(c\!-\! b)\Ga(1\!-\! c\!+\! a)\Ga(1\!-\! c\!+\! b)}
    \sum_{n=0}^\infty \dfrac{\Ga(a\!-\! c\!+\!1\!+\!n)\Ga(b\!-\! c\!+\!1\!+\! n)}
    {\Ga(2\!-\! c\!+\! n)\Ga(1\!+\! n)}
    x^{n+1-c},\\
    &=-\frac{\sin(\pi(a-c))\sin(\pi(b-c))}
    {\pi\sin(\pi c)}
    \sum_{n'=1-c}^\infty \dfrac{\Ga(a\!+\!n')\Ga(b+\! n')}
    {\Ga(c\!+\! n')\Ga(1\!+\! n')}
    x^{n'} ,
  \end{align*}
  where $n'=n+1-c$ and 
  $\sum_{n'=1-c}^\infty$ means the sum of $n'$ running  
  over the set $1-c+\N=\{1-c+n\mid n\in \N\}$. 
\end{Ex}
The functions $\{ F_{I}(x)\}_{I\in \ztwom}$ 
form a basis of the local solution space $\sol =\sol (a,b,c)$ to the system $E_C(a,b,c)$ 
around a point $\dot x$ in $D_C -S$
under conditions (\ref{irred-1}) and
\begin{align}
  \label{c-cond}
  c_1,\dots,c_m\notin \Z. 
\end{align}
We set a (row) vector valued function 
$$\bfF(x)=(\dots, F_{I}(x),\dots),$$
where $I\in \ztwom$ are aligned by the pure lexicographic order as 
$$
(0,\dots,0),\ (1,0,\dots,0), \ (0,1,\dots,0),\  (1,1,\dots,0),\ 
(0,0,1,\dots,0),\ 
\dots, (1,\dots,1).
$$
Note that its entries have factors 
$$1 ,\ x_1^{1-c_1},\ x_2^{1-c_2},\ x_1^{1-c_1}x_2^{1-c_2},\  x_3^{1-c_3},\ 
\dots,\  x_1^{1-c_1}x_2^{1-c_2}\cdots x_m^{1-c_m},$$
respectively.

\subsection{Monodromy representation}
\label{section-MR}
Put $\dot{x}=\left( \frac{1}{2m^2},\ldots ,\frac{1}{2m^2} \right) \in X$. 
For $\rho \in \pi_1 (X,\dot{x})$ and $g \in \sol$, 
let $\rho_* g$ be the analytic continuation of $g$ along $\rho$. 
Since $\rho_* g$ is also a solution to $E_C (a,b,c)$,  
the map $\rho_* :\sol \to \sol ;\ g\mapsto \rho_* g$ is 
a $\C$-linear automorphism which satisfies 
$(\rho \cdot \rho')_* =\rho'_* \circ \rho_*$ for $\rho,\rho' \in \pi_1 (X,\dot{x})$. 
Here, the composition $\rho \cdot \rho'$ of loops $\rho$ and $\rho'$ is 
defined as the loop going first along $\rho$, 
and then along $\rho'$. 
We thus obtain a representation 
$$
\CM :\pi_1 (X,\dot{x}) \to GL(\sol)
$$
of $\pi_1 (X,\dot{x})$, where $GL(V)$ is 
the general linear group on a $\C$-vector space $V$. 
This representation $\CM$ is called the monodromy representation of $E_C(a,b,c)$. 

Let $\rho_0, \rho_1 ,\ldots ,\rho_m$ be loops in $X$ so that 
\begin{itemize}
\item $\rho_0$ turns the hypersurface $(R(x)=0)$ around the point 
  $\left( \frac{1}{m^2},\ldots, \frac{1}{m^2} \right)$, positively, 
\item $\rho_k \ (k=1,\dots , m)$ turns the hyperplane $(x_k=0)$, positively. 
\end{itemize}
For explicit definitions of them, see \cite{G-FC-monodromy}. 
\begin{Fact}[\cite{G-FC-monodromy}]\label{pi1}
  The loops $\rho_0 ,\rho_1 ,\ldots ,\rho_m$ generate 
  the fundamental group $\pi_1 (X,\dot{x})$. 
  Moreover, if $m\geq 2$, then they satisfy the following relations:
  \begin{align*}
    \rho_i \rho_j =\rho_j \rho_i \quad (i,j=1,\dots , m) ,\quad
    (\rho_0 \rho_k)^2 =(\rho_k \rho_0)^2 \quad (k=1,\dots , m).
  \end{align*}
\end{Fact}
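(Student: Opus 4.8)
The plan is to prove the generation statement and the braid-type relations separately. Write $X=\C^m\setminus S$ with $S=(\prod_k x_k\cdot R(x)=0)$. Two preliminary facts drive everything. First, the hypersurface $(R=0)$ is irreducible: under the squaring map $q\colon(\C^*)^m\to(\C^*)^m$, $(y_1,\dots,y_m)\mapsto(y_1^2,\dots,y_m^2)$, one has $R(y_1^2,\dots,y_m^2)=\prod_{\vep\in\{\pm1\}^m}(1+\sum_k\vep_k y_k)$, so the preimage of $(R=0)$ under $q$ is $\bigcup_\vep H_\vep$ with $H_\vep=(1+\sum_k\vep_k y_k=0)$ a single $(\Z/2)^m$-orbit of hyperplanes, and $(R=0)$, being the image of any one $H_\vep$, is irreducible. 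Second, $\dot x=(\tfrac1{2m^2},\dots,\tfrac1{2m^2})$ lies in the positive real chamber of $D_C-S$, and each $\rho_i$ is built from a segment issuing from $\dot x$, a small positively-oriented circle, and the segment traversed back.

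For generation I would apply the Zariski--Lefschetz theorem on generic line sections: for a generic complex line $L$ through $\dot x$, the inclusion induces a surjection $\pi_1(L\setminus S,\dot x)\twoheadrightarrow\pi_1(X,\dot x)$, and $L$ meets $S$ transversally in $m$ points on the coordinate hyperplanes and $\deg R=2^{m-1}$ points on $(R=0)$; hence $\pi_1(X)$ is generated by these $m+2^{m-1}$ meridians. After transport along the real axis, the $m$ coordinate meridians are $\rho_1,\dots,\rho_m$ up to homotopy in $X$, so it remains to see that every meridian of $(R=0)$ lies in $\langle\rho_0,\dots,\rho_m\rangle$. I would deduce this from the braid monodromy of $L$ around $S$ together with irreducibility of $(R=0)$; equivalently, on the $(\Z/2)^m$-cover $Y=q^{-1}(X)=(\C^*)^m\setminus\bigcup_\vep H_\vep\to X$ the deck group permutes the $H_\vep$ transitively, and since $\rho_k$ maps to the $k$-th generator of $(\Z/2)^m$, this permutation is realized on $\pi_1$ by conjugation by words in $\rho_1,\dots,\rho_m$; thus every meridian of $(R=0)$ is conjugate to $\rho_0$ by such a word, and $\pi_1(X,\dot x)=\langle\rho_0,\dots,\rho_m\rangle$.

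For the relations I would localize $S$ at its codimension-two strata. A generic point of $(x_i=0)\cap(x_j=0)$ has a polydisk neighbourhood meeting $S$ only in the normal-crossing pair $\{x_i=0\}\cup\{x_j=0\}$; concretely, on the $2$-plane $P=\{x_l=\tfrac1{2m^2}\ (l\neq i,j)\}$ the polynomial $R$ does not vanish on a bidisk $\{|x_i|,|x_j|<\tfrac{1+\epsilon}{2m^2}\}$ (small $\epsilon>0$) which contains $\dot x$, and the complement of the two coordinate lines in that bidisk has fundamental group $\Z^2$; representing $\rho_i,\rho_j$ inside it gives $\rho_i\rho_j=\rho_j\rho_i$. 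For $(\rho_0\rho_k)^2=(\rho_k\rho_0)^2$ I would first note that at a generic point of $(x_k=0)\cap(R=0)$ the singular locus has a simple germ: pairing the factors of $R$ with opposite sign of $\vep_k$ gives
\[ R(x)=\prod_{\vep'\in\{\pm1\}^{m-1}}\Bigl(\bigl(1+\textstyle\sum_{l\neq k}\vep'_l\sqrt{x_l}\bigr)^2-x_k\Bigr), \]
so with $w=1+\sum_{l\neq k}\vep'_l\sqrt{x_l}$ a local coordinate transverse to the relevant branch of $(R=0)$, the germ of $S$ (times a polydisk in the remaining variables) is $\{v(v-u^2)=0\}$ with $v=x_k$, $u=w$: thus $(R=0)$ is smooth there and simply tangent to $\{x_k=0\}$. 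Then I would compute $\pi_1$ of the complement of the tacnode $\{v(v-u^2)=0\}$ in $\C^2$ by Zariski--van Kampen along $(u,v)\mapsto u$: the generic fibre is a twice-punctured line with $\pi_1=\langle a,b\rangle$ free ($a,b$ the meridians of $v=0$ and $v=u^2$), the only special fibre lies over $u=0$, where $v=u^2$ circles $v=0$ twice as $u$ circles the origin, so the braid monodromy is the square of the full twist and acts on $\langle a,b\rangle$ by conjugation by $(ab)^2$; the relation $(ab)^2a(ab)^{-2}=a$ is equivalent to $abab=baba$, i.e.\ $(ab)^2=(ba)^2$. Identifying $a$ with $\rho_0$ and $b$ with $\rho_k$ yields the relation. (For $m=1$ both strata are empty, which is why the relations are asserted only for $m\geq2$.)

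The step I expect to be the genuine obstacle is not the local fundamental-group computations — the $\Z^2$ and the tacnode group $\langle a,b\mid(ab)^2=(ba)^2\rangle$ are routine once the local models are in place — but the global bookkeeping that matches the abstract meridians above with the specific based loops $\rho_0,\dots,\rho_m$ of \cite{G-FC-monodromy}. One must verify that transporting the generic-line meridians, and the local meridians at the strata, back to $\dot x$ along suitably chosen paths reproduces the $\rho_i$ up to homotopy in $X$; in particular that the meridian of $(R=0)$ anchored near $(\tfrac1{m^2},\dots,\tfrac1{m^2})$ can be slid to one anchored near a generic point of $(x_k=0)\cap(R=0)$ through a homotopy compatible with $\rho_k$, and that the $2^{m-1}$ line meridians of $(R=0)$ are all absorbed into $\langle\rho_0,\dots,\rho_m\rangle$. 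This demands an explicit analysis of the chamber structure of $D_C-S$ near the positive real locus and careful choices of connecting paths, which is the geometric core of \cite{G-FC-monodromy}.
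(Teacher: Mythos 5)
The paper itself offers no proof of this statement: it is imported as a Fact with the proof deferred entirely to \cite{G-FC-monodromy}, so your sketch has to stand on its own. The computational ingredients you supply are correct: $(R=0)$ is irreducible because its preimage under the squaring map is a single $(\Z/2)^m$-orbit of hyperplanes; $\deg R=2^{m-1}$; the bound $\sum_k|\sqrt{x_k}|\le \frac{m-2}{m\sqrt{2}}+\frac{2\sqrt{1+\epsilon}}{m\sqrt{2}}\approx\frac{1}{\sqrt{2}}<1$ does show $R\neq0$ on the bidisk in the slice $P$, so the commutation relations follow from the normal-crossing local model; the local model $v(v-u^2)=0$ at a generic point of $(x_k=0)\cap(R=0)$ is right, and so is the Zariski--van Kampen computation giving the $(2,4)$-torus-link group $\langle a,b\mid (ab)^2=(ba)^2\rangle$.

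The genuine gap --- which you flag but do not close --- occurs twice in the same form, and it is where the actual content of the Fact lives. (1) Generation: Zariski--Lefschetz produces $m+2^{m-1}$ meridians generating $\pi_1(X,\dot x)$, each merely \emph{conjugate} to some $\rho_i$; this only shows that the normal closure of $\langle\rho_0,\dots,\rho_m\rangle$ is the whole group, and generation by chosen meridians does not follow formally (for the trefoil group $\langle a,b\mid aba=bab\rangle$ a single meridian generates a proper subgroup whose normal closure is everything). Your repair via the cover $Y\to X$ does not close this: two meridians of the \emph{same} irreducible hyperplane $H_\vep$ in $Y$ are conjugate by an element of $\pi_1(Y)$, which is not a priori a word in the $\rho_i$, and the deck transformation carrying $H_\vep$ to $H_{\vep'}$ pins down the conjugator only modulo $\pi_1(Y)$. (2) Relations: the tacnode computation yields $(ab)^2=(ba)^2$ for one \emph{particular} based pair of meridians of $(R=0)$ and $(x_k=0)$; since this relation is not preserved under conjugating $a$ and $b$ independently, you must exhibit a \emph{single} connecting path along which the local pair transports to $(\rho_0,\rho_k)$, i.e.\ a simultaneous conjugacy. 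Both points require the explicit chamber/path analysis you defer to \cite{G-FC-monodromy}; as written, the argument proves only that $\pi_1(X)$ is normally generated by $\rho_0,\dots,\rho_m$ and that the asserted relations hold for some conjugates of these loops.
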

In \cite{G-FC-monodromy}, $m+1$ linear maps 
$\CM_i =\CM (\rho_i) \ (i=0,\dots , m)$ are investigated in terms of 
twisted homology groups and the intersection form. 
In this paper, we do not explain them. 
What we need is the following fact.
\begin{Fact}[\cite{G-FC}]\label{solution-cycle}
  Suppose (\ref{irred-1}) and (\ref{c-cond}). 
  \begin{enumerate}[(i)]
  \item By integration, the twisted homology group is isomorphic 
    to the solution space $\sol$. 
  \item We can construct twisted cycles $\{\De_I\}_I$ that correspond to $\{F_I\}_I$. 
  \item The intersection matrix $H=\left( H_{I,I'} \right)_{I,I'}$ 
    with respect to the basis $\{\De_I\}_I$ is 
    diagonal, and its $(I,I)$-entry is
  \begin{align*}
    H_{I,I}
    =\prod_{k=1}^m \frac{(-1)^{i_k} \ga_k^{1-i_k}}{\ga_k-1}
    \cdot \frac{( \al -\prod_{k=1}^m \ga_k^{i_k}) (\be -\prod_{k=1}^m \ga_k^{i_k} ) }
    {( \al -\prod_{k=1}^m  \ga_k ) (\be -1)} .
  \end{align*}  \end{enumerate}
\end{Fact}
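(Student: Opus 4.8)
All three assertions concern the Euler-type integral representation of $F_C$. The plan is: recall this representation and deduce (i) from the perfectness of the twisted period pairing together with a dimension count; then build the cycles $\De_I$ from it and match their periods with the $F_I$; and finally obtain the intersection matrix by a factorization argument, which is where the real work lies.

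\emph{Integral representation and part (i).} Recall from \cite{L}, \cite{G-FC} that $F_C(a,b,c;x)$ admits an Euler-type integral representation in $m$ variables $t=(t_1,\dots,t_m)$: the assignment $\Delta\mapsto y_\Delta(x)=\int_\Delta u\,\om$ sends twisted cycles to solutions of $E_C(a,b,c)$, where $\om$ is $dt_1\wedge\cdots\wedge dt_m$ times a monomial in the $t_k$, and $u$ is a product of powers of $t_1,\dots,t_m$ and of the two polynomials $1-\sum_k t_k$ and $t_1\cdots t_m-\sum_k x_k\prod_{j\ne k}t_j$, with exponents affine-linear in $a,b,c_1,\dots,c_m$ arranged so that the monodromy of $u$ around $\{t_k=0\}$ has eigenvalue $\ga_k$ and around the other two divisors eigenvalues $\al$ and $\be$ (up to sign conventions); for $m=1$ this is the classical Euler integral of ${}_2F_1$, and the locus in $x$-space over which the divisor of $u$ together with the hyperplane at infinity fails to be in general position is exactly $S=\bigl(\prod_{k=1}^m x_k\cdot R(x)=0\bigr)$. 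Let $\mathcal{L}=\mathcal{L}_x$ be the rank-one local system on the complement $U_x$ determined by $u$. Conditions (\ref{irred-1}) and (\ref{c-cond}) guarantee that the sum of the local exponents along every flat of this configuration is non-integral; hence $H_*(U_x,\mathcal{L})$ is concentrated in degree $m$ with $\dim H_m(U_x,\mathcal{L})=2^m=\mathrm{rank}\,E_C=\dim\sol$, and the twisted period pairing $H_m(U_x,\mathcal{L})\times H^m_{\mathrm{dR}}(U_x,\mathcal{L}^\vee)\to\C$ is perfect. Since $\Delta\mapsto y_\Delta$ lands in $\sol$ and will be shown in part (ii) to be surjective, perfectness forces it to be injective, so $\Delta\mapsto y_\Delta$ is an isomorphism $H_m(U_x,\mathcal{L})\xrightarrow{\ \sim\ }\sol$; this is (i).

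\emph{The cycles $\De_I$ and part (ii).} Let $\De_{\zero}$ be the regularization into a twisted cycle of the bounded real chamber containing the part of the standard simplex $\{t_k\ge0,\ \sum_k t_k\le1\}$ on which $\sum_k x_k/t_k<1$; expanding $y_{\De_{\zero}}$ by iterated beta integrals in the $x_k$ recovers $F_{\zero}=F_C$ together with the normalizing $\Gamma$-factor in (\ref{series-sol}). For general $I=(i_1,\dots,i_m)\in\ztwom$ let $\De_I$ be obtained from $\De_{\zero}$ by applying, in each direction $k$ with $i_k=1$, the substitution $t_k\mapsto x_k/t_k$. On the integrand this substitution negates the exponent of $t_k$ and exchanges the roles of the two further divisors in the $k$-th variable; it thereby realizes the parameter changes $c_k\mapsto 2-c_k$, $a\mapsto a^{e_k}$, $b\mapsto b^{e_k}$ and produces the factor $x_k^{i_k(1-c_k)}$ and a Jacobian. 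Carrying out all the $i_k=1$ substitutions and re-expanding gives $y_{\De_I}=F_I$ with exactly the prefactor of (\ref{series-sol}); equivalently $\De_I$ is a product over $k$ of one-dimensional loaded arcs, the $k$-th hugging $\{t_k=0\}$ when $i_k=0$ and being its $t_k\mapsto x_k/t_k$ reflection when $i_k=1$. This is the inductive, tensor-product construction alluded to in \S\ref{section-intro}, and since the $F_I$ form a basis of $\sol$ this also supplies the surjectivity used in (i), proving (ii).

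\emph{The intersection matrix and part (iii).} For $I\ne I'$, pick $k$ with $i_k\ne i'_k$: in the $k$-th direction the arcs of $\De_I$ and $\De_{I'}$ lie on opposite sides of $\{t_k=0\}$ relative to the degree-$m$ divisor, so after a small isotopy the regularized cycles are disjoint and $H_{I,I'}=0$; thus $H$ is diagonal. For the diagonal entries one computes the self-intersection of $\De_I$ by the regularized-chamber intersection calculus. Because the only coupling among the $t_k$ is through the two further factors, this number factors into $m$ one-dimensional ``coordinate'' contributions and the contribution of those two factors. The $k$-th coordinate factor is the self-intersection of a regularized loop about $\{t_k=0\}$ whose monodromy is $\ga_k$ if $i_k=0$ and $\ga_k^{-1}$ if $i_k=1$ (the substitution inverts it), namely $\ga_k/(\ga_k-1)$ or $-1/(\ga_k-1)$, i.e.\ $(-1)^{i_k}\ga_k^{1-i_k}/(\ga_k-1)$. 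For the remaining factor, the same substitutions carry the eigenvalues $\al,\be$ of the two further divisors to $\al^I=\al\prod_k\ga_k^{-i_k}$ and $\be^I=\be\prod_k\ga_k^{-i_k}$, so that $\al-\prod_k\ga_k^{i_k}=\bigl(\prod_k\ga_k^{i_k}\bigr)(\al^I-1)$ and likewise for $\be$; combining this with the $\Gamma$-normalization built into (\ref{series-sol}), which is chosen precisely so that the auxiliary constants telescope, leaves $(\al-\prod_k\ga_k^{i_k})(\be-\prod_k\ga_k^{i_k})\big/\bigl((\al-\prod_k\ga_k)(\be-1)\bigr)$. Multiplying the two pieces gives the asserted value of $H_{I,I}$.

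\emph{Main obstacle.} The hard step is this last computation: keeping track of the regularization coefficients of $\De_{\zero}$ and of how each substitution $t_k\mapsto x_k/t_k$ conjugates them — especially across the degree-$m$ divisor, which is not a hyperplane and meets the other factors along nontrivial strata — and confirming that, once the $\Gamma$-normalization of (\ref{series-sol}) is inserted, the auxiliary constants cancel to leave exactly the displayed rational function rather than a scalar multiple of it. Verifying the non-resonance needed in (i) along those same strata is a secondary technical point.
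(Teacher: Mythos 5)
You should first note that the paper contains no proof of this statement: it is imported as a Fact with the citation \cite{G-FC}, so there is no internal argument to compare yours against, and any assessment has to be of your reconstruction on its own merits. Your outline does follow the same general route as the cited source (Euler-type integral in $t_1,\dots,t_m$, regularized cycles built coordinate-wise using the inversions $t_k\mapsto x_k/t_k$, intersection numbers computed by the regularized-chamber calculus), and parts (i)--(ii) are essentially sound once the slightly circular ``perfectness forces injectivity'' is replaced by the simpler observation that a surjection between spaces of the common dimension $2^m$ is an isomorphism; the inputs you identify (vanishing of twisted homology outside degree $m$ and the dimension count under (\ref{irred-1}) and (\ref{c-cond})) are indeed what is needed there.

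The genuine gap is in part (iii), which is the part the rest of the paper actually uses, and you have flagged it yourself under ``main obstacle'' without closing it. Two concrete problems. First, the off-diagonal vanishing does not follow from ``after a small isotopy the regularized cycles are disjoint'': the inversion $t_k\mapsto x_k/t_k$ with $x_k>0$ preserves the sign of $t_k$, so the two arcs are not on opposite sides of $\{t_k=0\}$, and in general regularized cycles attached to adjacent chambers of a real arrangement have nonzero intersection number (already in one variable the regularizations of $(0,1)$ and $(1,2)$ meet with intersection number a nonzero multiple of $1/(c-1)$, $c$ the monodromy at the shared face). So $H_{I,I'}=0$ must come either from a verified disjointness of the specific chambers or from an explicit cancellation; neither is supplied. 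Second, the factorization of the self-intersection into $m$ coordinate factors times a single ``further-divisors'' factor is precisely the nontrivial content of the computation: the divisor $t_1\cdots t_m-\sum_k x_k\prod_{j\ne k}t_j=0$ couples all coordinates and meets the other divisors along positive-dimensional strata, so the local contributions do not a priori split as a product, and the regularization constants together with the $\Gamma$-normalization in (\ref{series-sol}) must be tracked to obtain $H_{I,I}$ exactly rather than up to an $I$-dependent scalar --- and an $I$-dependent scalar would corrupt everything downstream (e.g.\ the entries of $\bfv$ in Corollary \ref{cor:rep-mat}). As written, your part (iii) reverse-engineers the shape of the answer from the stated formula rather than deriving it.
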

By using this fact and properties of the intersection form, 
we can induce the intersection form on $\sol$. 

\begin{Def}
  \label{def-intersection-form}
  We assume (\ref{irred-1}) and (\ref{c-cond}).
  We define a bilinear form 
 (called the intersection form)
  \begin{align*}
    \CI (\cdot ,\cdot ):\sol \times \sol \to \C (\al ,\be ,\ga)
  \end{align*}
  as follows. 
  For any $F(x),G(x) \in \sol$, we express them as 
  linear combinations of the basis $\{ F_I \}_I$: 
  \begin{align*}
    F(x)=\bfF(x)\cdot \bff ,\quad G(x)=\bfF(x)\cdot \bfg,
    \qquad \bff,\bfg \in \C(\al,\be,\ga)^{2^m} ,
  \end{align*}
  and define 
  \begin{align*}
    \CI(F(x),G(x))=\tp \bff \cdot H \cdot \bfg^{\vee} .
  \end{align*}
\end{Def}

\begin{Rem}
\label{rem-zure}
Let $\CI_H$ be the intersection form  on $\sol$ induced from that on 
the twisted homology group by the isomorphism 
in Fact \ref{solution-cycle} (i).  
The intersection form $\CI$ coincides with $\CI_H$ 
modulo a constant multiple which never vanishes 
under the conditions (\ref{irred-1}) and (\ref{c-cond}). 
\end{Rem}

\begin{Cor}
  \label{cor-intersection-form}
  Under the conditions (\ref{irred-1}) and (\ref{c-cond}), 
  the intersection form  $\CI$ is a monodromy invariant form, that is,  
  for any loops $\rho \in \pi_1 (X,\dot{x})$, we have 
  \begin{align*}
    \CI(\CM(\rho)(F(x)),\CM(\rho)(G(x)))=\CI(F(x),G(x)) .
  \end{align*}
  In other words, $H$ satisfies 
  \begin{align*}
    \tp M_{\rho} \cdot H \cdot M_{\rho}^{\vee} =H ,
  \end{align*}
  where $M_{\rho}$ is the representation matrix of $\CM(\rho)$
  with respect to the basis $\{ F_I\}_I$. 
\end{Cor}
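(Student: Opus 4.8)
The plan is to reduce the assertion to the monodromy invariance of the intersection pairing on twisted homology, which is formal, and then to translate that invariance into the matrix identity by a direct computation with coefficient vectors.

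By Fact \ref{solution-cycle}(i), integration identifies the twisted homology group with $\sol$, and this identification is equivariant for the two actions of $\pi_1(X,\dot x)$: analytic continuation of a solution along a loop $\rho$ corresponds, under the period pairing, to the homological monodromy of $\rho$ on twisted cycles. (This compatibility of the topological monodromy with the Gauss--Manin connection underlies the description of the $\CM_i$ in \cite{G-FC-monodromy}.) On the homology side, the intersection pairing is a flat pairing between the local system of locally finite twisted homology groups and its dual, hence is preserved by the monodromy action; transporting it through the equivariant isomorphism above shows that the induced form $\CI_H$ on $\sol$ of Remark \ref{rem-zure} is monodromy invariant. By that remark $\CI=\kappa\cdot\CI_H$ for a scalar $\kappa\in\C(\al,\be,\ga)$ that is nonzero under (\ref{irred-1}) and (\ref{c-cond}), and scaling a bilinear form by a constant does not affect invariance; so $\CI$ is monodromy invariant, which is the first assertion.

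For the matrix reformulation, write $F(x)=\bfF(x)\cdot\bff$ and $G(x)=\bfF(x)\cdot\bfg$ as in Definition \ref{def-intersection-form}. Since $M_{\rho}$ is the representation matrix of $\CM(\rho)$ in the basis $\{F_I\}_I$ we have $\CM(\rho)(\bfF(x))=\bfF(x)\cdot M_{\rho}$, hence $\CM(\rho)(F)=\bfF(x)\cdot M_{\rho}\cdot\bff$ and likewise $\CM(\rho)(G)=\bfF(x)\cdot M_{\rho}\cdot\bfg$, so that by Definition \ref{def-intersection-form}
\[
\CI\big(\CM(\rho)(F),\CM(\rho)(G)\big)
=\tp(M_{\rho}\bff)\cdot H\cdot(M_{\rho}\bfg)^{\vee}
=\tp\bff\cdot\big(\tp M_{\rho}\cdot H\cdot M_{\rho}^{\vee}\big)\cdot\bfg^{\vee} .
\]
Comparing with $\CI(F,G)=\tp\bff\cdot H\cdot\bfg^{\vee}$ and letting $F,G$ run over the basis $\{F_I\}_I$ (so that the two sides read off the $(I,I')$-entries, the coordinate vectors being fixed by $(\cdot)^{\vee}$) gives $\tp M_{\rho}\cdot H\cdot M_{\rho}^{\vee}=H$.

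An alternative, more hands-on proof of the matrix identity avoids Remark \ref{rem-zure}: the matrices $N\in GL_{2^m}(\C(\al,\be,\ga))$ with $\tp N\cdot H\cdot N^{\vee}=H$ form a subgroup (stable under products, since $(\cdot)^{\vee}$ is a field automorphism commuting with transposition, and under inverses), so by Fact \ref{pi1} it suffices to check the generators $\tp M_{\rho_i}\cdot H\cdot M_{\rho_i}^{\vee}=H$ for $i=0,1,\dots,m$, using the diagonal $H$ of Fact \ref{solution-cycle}(iii) and the explicit $M_{\rho_i}$ from \cite{G-FC-monodromy}. In either approach the only delicate point is bookkeeping: one must keep track of the left/right conventions for the loop action (recall $(\rho\rho')_*=\rho'_*\circ\rho_*$, hence $M_{\rho\rho'}=M_{\rho'}M_{\rho}$) and of the involution $(\cdot)^{\vee}$, which appears because the intersection pairing involves the dual local system and therefore inverts $\al,\be,\ga_k$; this is where I expect the care to be needed, the rest being formal.
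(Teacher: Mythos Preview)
Your proof is correct and matches the paper's intent: the corollary is stated without proof, being a direct consequence of the standard monodromy invariance of the intersection pairing on twisted homology, transported to $\sol$ via Fact \ref{solution-cycle} and Remark \ref{rem-zure}, which is exactly the argument you spell out. The matrix reformulation you derive is the routine unwinding of Definition \ref{def-intersection-form}, and your alternative check on generators via Fact \ref{pi1} is a legitimate (if more laborious) substitute.
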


Let $M_i$ be the representation matrix of $\CM_i$ $(i=0,\dots , m)$
with respect to the basis $\{ F_I\}_I$. 
We give explicit expressions of them. 
\begin{Fact}[\cite{G-FC-monodromy}]\label{fact:rep-mat-1}
  We assume (\ref{irred-1}), (\ref{c-cond}) and 
  $\la=(-1)^{m-1} \al^{-1} \be^{-1} \prod_{k=1}^m \ga_k \neq 1$. 
  For $k=1,\dots , m$, 
  the representation matrix $M_k$ is diagonal,
  and its $(I,I)$-entry is $\ga_k^{-i_k}$. 
  The representation matrix $M_0$ is expressed as 
$$
  M_0 =E_{2^m} -\frac{1-\la}{\tp \one \cdot H  \cdot\one} 
  \cdot \one \cdot \tp \one \cdot H 
  =E_{2^m} -\frac{(\be-1)(\al -\prod_{k=1}^m \ga_k )}{\al \be} 
  \cdot \one \cdot \tp \one \cdot H ,
$$
  where $E_{2^m}$ is the unit matrix of size $2^m$, 
  $\one$ is the column vector of size $2^m$ with all entries $1$, and 
  $H$ is the intersection matrix given in Fact \ref{solution-cycle}.
\end{Fact}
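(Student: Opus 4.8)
The plan is to handle $\rho_1,\dots,\rho_m$ directly and then concentrate on $\rho_0$, where essentially all the work lies; the detailed twisted-homology computation behind the shape of $M_0$ is in \cite{G-FC-monodromy}, so I describe the strategy one would follow to reconstruct it, using only the facts stated above.

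\emph{The matrices $M_k$, $k=1,\dots,m$.} I would argue from the explicit series (\ref{series-sol}). Write $F_I(x)=d_I\cdot\prod_{j=1}^m x_j^{i_j(1-c_j)}\cdot F_C(a^I,b^I,c^I;x)$ with $d_I$ the Gamma-factor constant of (\ref{series-sol}). The power series $F_C(a^I,b^I,c^I;x)$ has only non-negative integral exponents in each variable and converges near $\dot x$ off the coordinate hyperplanes, hence is single-valued along $\rho_k$; the prefactor $\prod_j x_j^{i_j(1-c_j)}$ is multiplied by $e^{2\pi\sqrt{-1}\,i_k(1-c_k)}=\ga_k^{-i_k}$ when $x_k$ encircles $0$ positively (here $i_k\in\{0,1\}$, so $e^{2\pi\sqrt{-1}i_k}=1$). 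Thus $\CM_k F_I=\ga_k^{-i_k}F_I$, i.e.\ $M_k$ is diagonal with $(I,I)$-entry $\ga_k^{-i_k}$. (This is consistent with the fact that along $(x_k=0)$ the characteristic exponents of $E_C$ are $0$ and $1-c_k$, each with multiplicity $2^{m-1}$, with no logarithmic terms since $1-c_k\notin\Z$, and $\{F_I\}$ is adapted to that splitting.)

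\emph{$M_0$ is a pseudo-reflection with eigenvalue $\la$.} Near the base point $(1/m^2,\dots,1/m^2)$ of $\rho_0$ only the factor $1-\sum_{k=1}^m\sqrt{x_k}$ of $R(x)$ vanishes, cutting out a smooth branch of $S$. Transverse to this branch the characteristic exponents of $E_C$ are $0$ with multiplicity $2^m-1$ and a single exponent $\mu$ with $e^{2\pi\sqrt{-1}\mu}=\la$; one obtains this either from the analysis of $S$ in \cite{HT}, or by restricting $E_C$ to a generic transversal line (the restriction is an ordinary differential equation of hypergeometric type whose exponent difference at the relevant singular point, after passing through the double covers $y_k=\sqrt{x_k}$, exponentiates to $(-1)^{m-1}\al^{-1}\be^{-1}\prod_k\ga_k=\la$). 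Since $\la\neq1$, $\mu$ is a non-integer, no logarithmic solutions occur, and $\CM_0$ is conjugate to $\operatorname{diag}(1,\dots,1,\la)$; in particular $M_0-E_{2^m}$ has rank one.

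\emph{Identifying the reflecting vector.} By Corollary \ref{cor-intersection-form}, $M_0$ preserves the non-degenerate form $(u,w)\mapsto\tp u\,H\,w^\vee$. A short linear-algebra computation shows that a rank-one pseudo-reflection $M_0 u=u-\phi(u)v$, with $v$ the $\la$-eigenvector and $\la\neq1$ (so $\tp v\,H\,v^\vee\neq0$), preserving such a form must have $\phi$ proportional to $u\mapsto\tp u\,H\,v^\vee$, hence
\[
M_0=E_{2^m}-\frac{1-\la}{\tp v\,H\,v^\vee}\,v\cdot{}^{t}(v^\vee)\cdot H .
\]
So everything reduces to showing $v=\one$, equivalently that the $(2^m-1)$-dimensional space of solutions continuing single-valuedly around $1-\sum\sqrt{x_k}=0$ is $\{\bfF(x)\cdot\bff:\tp\bff\,H\,\one=0\}$ and the branching direction is $\bfF(x)\cdot\one=\sum_{I}F_I(x)$. \emph{This is the main obstacle, and it is precisely the content of \cite{G-FC-monodromy}:} realizing the $F_I$ as Euler-type integrals over the twisted cycles $\De_I$ of Fact \ref{solution-cycle}, the circuit map along $\rho_0$ differs from the identity only through the cycle pinched as $x$ crosses the moving branch, and because the base point of $\rho_0$ is the fully symmetric point the pinch occurs simultaneously along all the box cycles $\De_I$; so the vanishing cycle equals $\sum_I\De_I$ up to a non-zero scalar, and transporting through the isomorphism of Fact \ref{solution-cycle}(i) gives $v\propto\sum_I F_I$. (One can alternatively confirm $v=\one$ by induction on $m$, using that the hyperplane arrangement for $F_C$ in $m$ variables is built from the one in $m-1$ variables and the reflecting vector respects the resulting tensor decomposition.)

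\emph{Assembling the formula.} Substituting $v=\one$, $\one^\vee=\one$ gives $M_0=E_{2^m}-\frac{1-\la}{\tp\one\,H\,\one}\,\one\,\tp\one\,H$. For the second displayed form it remains to check $\frac{1-\la}{\tp\one\,H\,\one}=\frac{(\be-1)(\al-\prod_{k=1}^m\ga_k)}{\al\be}$: expanding $(\al-\prod_k\ga_k^{i_k})(\be-\prod_k\ga_k^{i_k})$ and summing the formula of Fact \ref{solution-cycle}(iii) over $I\in\ztwom$ coordinatewise yields $\tp\one\,H\,\one=\sum_I H_{I,I}=\frac{\al\be+(-1)^m\prod_k\ga_k}{(\al-\prod_k\ga_k)(\be-1)}$, while $1-\la=\frac{\al\be+(-1)^m\prod_k\ga_k}{\al\be}$, and the ratio is as claimed. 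I expect this last step to be routine; the genuinely delicate point is the simultaneous-pinch/eigenvector identification of the preceding paragraph.
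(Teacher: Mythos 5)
Your reconstruction is correct and follows essentially the route the paper itself indicates: the paper gives no proof of this Fact (it is imported from \cite{G-FC-monodromy}), but Remark \ref{rem-eigen} states that the expressions come exactly from the eigenvector analysis you carry out --- $M_k$ diagonal from the prefactors $x_k^{i_k(1-c_k)}$, $\CM_0$ a pseudo-reflection whose $\la$-eigenvector is $f_0=\sum_I F_I$ (corresponding to $\one$) with $1$-eigenspace the $\CI$-orthogonal complement, combined with the invariance of $\CI$ from Corollary \ref{cor-intersection-form}. Your scalar check $\tp\one H\one=(\al\be+(-1)^m\prod_k\ga_k)/((\al-\prod_k\ga_k)(\be-1))$ is right, and you correctly isolate the one input that genuinely requires the twisted-homology work of \cite{G-FC-monodromy}, namely that the reflecting vector is $\one$.
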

\begin{Rem}\label{rem-eigen}
  These expressions are obtained from consideration to eigenvectors of each 
  $\CM_i \in GL(\sol )$. 
  \begin{enumerate}[(i)]
  \item $\CM_k$ ($k=1,\ldots,m$); 
    $F_I$ is an eigenvector of eigenvalue $\ga_k^{-1}$ (resp. $1$) 
    if $i_k=1$ (resp. $i_k=0$), where $I=(i_1,\ldots ,i_m)$. 
  \item $\CM_0$; 
    the eigenvalues of $\CM_0$ are 
    $\la$ and $1$. 
    The eigenspace of eigenvalue 
   $\la$ is one-dimensional and spanned by 
    $$
    f_0 = \sum_{I \in \ztwom} F_I , 
    $$
    which corresponds to $\one$ when we take the basis $\{F_I\}_I$. 
    The eigenspace of $\CM_0$ of eigenvalue $1$ is characterized as 
    $\{ g \in \sol \mid \CI (g,f_0 )=0 \}$.
  \item The first expression of $M_0$ is stable under the non-zero scalar 
    multiple to $H$. 
  \end{enumerate}
\end{Rem}

\section{Another basis}
In fact, $\{F_I \}$ does not form a basis of $\sol$ when $c_i$'s are integers. 
In this section, we introduce another basis $\{ \tF_I \}_I$ 
which is a well-defined basis even if $c_i$'s are integers,
and we give the circuit matrices with respect to this basis. 
Note that these do not coincide with 
solutions obtained by integrating the twisted cycles defined in \cite[\S 6]{G-FC-monodromy}

\subsection{Basis of $\sol$}
First, we construct a basis of $\sol$.
\begin{Lem}
\label{lem:scalar-mul}
Let $I=(i_1,i_2\dots,i_m)$ be any element of $\ztwom$ and 
$p=(p_1,p_2,\dots,p_m)$ be any element of $\Z^m$. 
Then the limit function 
$$\lim_{c\to p}(\ga_1-1)(\ga_2-1)\cdots(\ga_m-1)F_{I}(x)$$
is well-defined and not identically zero.
\end{Lem}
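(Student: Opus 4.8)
The key is to understand where the poles of $F_I(x)$ in the variables $c_1,\dots,c_m$ come from. Recall that
$$
F_I(x)=\frac{\prod_{k=1}^m\Ga((-1)^{i_k}(1-c_k))}{\Ga(1-a^I)\Ga(1-b^I)}\cdot \prod_{k=1}^m x_k^{i_k(1-c_k)}\cdot F_C(a^I,b^I,c^I;x),
$$
and by the assumption (\ref{irred-1}) in the equivalent form (\ref{irred-3}), neither $a^I$ nor $b^I$ is an integer, so $1/\bigl(\Ga(1-a^I)\Ga(1-b^I)\bigr)$ is holomorphic and nonvanishing as a function of $c=(c_1,\dots,c_m)$ near any $p\in\Z^m$; the same is true of the monomial factor $\prod_k x_k^{i_k(1-c_k)}$. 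Hence the only sources of poles (or of an identically-zero limit) are the Gamma factors $\Ga((-1)^{i_k}(1-c_k))$ in the numerator and the $1/(c_k,n_k)=\Ga(c_k^I)/\Ga(c_k^I+n_k)$ factors hidden inside the series $F_C(a^I,b^I,c^I;x)$, where $c_k^I=c_k$ if $i_k=0$ and $c_k^I=2-c_k$ if $i_k=1$.

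The plan is to treat the two cases $i_k=0$ and $i_k=1$ for each $k$ separately, reducing — by the product structure — to the one-variable computation already displayed in Example \ref{ex-series-1}. For $i_k=0$: the factor $\Ga(1-c_k)$ has a simple pole exactly when $c_k\in\{1,2,3,\dots\}$, i.e. when $p_k\ge 1$; multiplying by $(\ga_k-1)$, which vanishes to order $1$ at every integer value of $c_k$, cancels such a pole, and at integers $p_k\le 0$ the factor $(\ga_k-1)$ introduces a zero that must be compensated — and it is, because for $p_k\le 0$ the series $F_C$ has the shifted summation $\Ga(c_k+n_k)$ in the denominator producing, through $\Ga(1-c_k)$ reflected, the behaviour shown for $F_0$ in the example, where the combined expression $\sin(\pi c_k)\Ga(1-c_k)=\pi/\Ga(c_k)$ is holomorphic and nonzero. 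The clean way to package this is to rewrite $\Ga(1-c_k)=\pi/\bigl(\sin(\pi(1-c_k))\Ga(c_k)\bigr)$ and note $(\ga_k-1)=e^{2\pi\sqrt{-1}c_k}-1 = e^{\pi\sqrt{-1}c_k}\cdot(2\sqrt{-1})\sin(\pi c_k)$, so that $(\ga_k-1)\Ga(1-c_k)$ equals (up to a nonvanishing holomorphic factor) $\Ga(c_k)^{-1}$, which is entire; the resulting expression, reorganized as in the last line of the $F_0$ computation in Example \ref{ex-series-1}, has all its $c_k$-dependence either in entire functions or in convergent power series whose coefficients are holomorphic in $c_k$ near $p_k$, and a direct inspection of the leading term of that series (e.g. the $n_k=0$ or, when $p_k\le 0$, the $n_k=-p_k$ term) shows the limit is not identically zero in $x$. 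For $i_k=1$: here $c_k^I=2-c_k$, the numerator factor is $\Ga(c_k-1)$, and the monomial carries $x_k^{1-c_k}$; the very same reflection-formula rewriting, matching the $F_1$ computation in Example \ref{ex-series-1} (where the relevant combination is $(\ga_k-1)\Ga(c_k-1)$, again reducible to $\Ga(2-c_k)^{-1}$ times a nonvanishing factor), shows holomorphy near $p_k$ and a nonzero leading term $x_k^{1-p_k}$ in the limit.

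The final step is to multiply these $m$ single-variable normalizations together: since $F_C(a^I,b^I,c^I;x)$ is a single multiple series $\sum_{n_1,\dots,n_m}$ rather than a product of one-variable series, I would make the per-variable argument at the level of the individual terms of the series, showing that for each multi-index $(n_1,\dots,n_m)$ the coefficient
$$
\frac{(a^I,|n|)(b^I,|n|)}{\prod_{k=1}^m (c_k^I,n_k)\, n_k!}\cdot\prod_{k=1}^m\Bigl[(\ga_k-1)\Ga((-1)^{i_k}(1-c_k))\Bigr]
$$
extends holomorphically to $c=p$, with $|n|=n_1+\dots+n_m$, using that $(a^I,|n|)$ and $(b^I,|n|)$ are holomorphic in $c$ (as $a^I,b^I$ avoid $\Z$, again by (\ref{irred-3})) and that each bracketed factor reduces to $\Ga(c_k^I)^{-1}$-type entire functions times units, while the $1/(c_k^I,n_k)$ combines with $\Ga(c_k^I)^{-1}$ to give $1/\Ga(c_k^I+n_k)$, which is entire; one then checks the resulting majorant series still converges on $D_C$ locally uniformly in $c$ near $p$ so that the limit may be taken term by term. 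Non-vanishing of the limit then follows by exhibiting a single nonzero coefficient in the limiting series (the minimal multi-index surviving the $\Ga^{-1}$-zeros).

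\emph{Main obstacle.} The genuinely delicate point is the non-vanishing at integer points $p_k\le 0$ for coordinates with $i_k=0$ (and $p_k\le 1$ for $i_k=1$): there the factor $(\ga_k-1)$ contributes a \emph{zero}, not a pole-cancellation, so one must verify that the series $F_C$ simultaneously degenerates — several leading coefficients vanish because $1/\Ga(c_k^I+n_k)=0$ for small $n_k$ — in a way that exactly produces a compensating simple zero, leaving a nonzero limit. Handling this requires the careful index-shift bookkeeping already visible in the $F_1$ line of Example \ref{ex-series-1} (the reindexing $n'=n+1-c$), now carried out uniformly in the presence of the other $m-1$ variables, and is where I expect the bulk of the technical work to lie.
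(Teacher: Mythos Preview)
Your approach is correct and is essentially the paper's: the one-line proof there observes that $F_I(x)$ carries the factor $\prod_{k} \Ga(c_k-i_k)\Ga(1-(c_k-i_k)) = \prod_{k} (-1)^{i_k}\pi/\sin(\pi c_k)$ (combining the explicit $\Ga((-1)^{i_k}(1-c_k))$ in the numerator with the $\Ga(c_k^I)$ hidden in the Pochhammer symbols, exactly as you propose), and this is canceled by $\prod_{k}(\ga_k-1)$. Your ``main obstacle'' therefore dissolves: since $\pi/\sin(\pi c_k)$ has a simple pole at \emph{every} integer, the factor $(\ga_k-1)$ always cancels a pole and never leaves a net zero, so non-vanishing follows immediately from the remaining coefficients $1/\Ga(c_k^I+n_k)$ being nonzero for all sufficiently large $n_k$ (together with $(a^I,|n|)(b^I,|n|)\ne 0$ by (\ref{irred-3})).
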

\begin{proof}
We have only to note that $F_{I}(x)$ has the factor 
$$\prod_{k=1}^m \Ga(c_k-i_k)\Ga(1-(c_k-i_k))
=\prod_{k=1}^m\frac{(-1)^{i_k}\pi}{\sin(\pi c_k)},
$$
which cancels out $(\ga_1-1)(\ga_2-1)\cdots(\ga_m-1)$.
\end{proof}

\begin{Lem}
\label{lem:wa}
Let $I=(i_1,\dots,\overset{k\textrm{-th}}{0},\dots,i_m)$ and 
$I'=(i_1,\dots,\overset{k\textrm{-th}}{1},\dots,i_m)$ 
be elements of $\ztwom$ and $p_k$ be an integer.
Then the limit function 
$$\lim_{c_k\to p_k}
(\ga_m-1)\cdots(\ga_{k+1}-1)(\ga_{k-1}-1)\cdots(\ga_1-1)
(F_{I}(x)+F_{I'}(x))$$
is well-defined and not identically zero.
\end{Lem}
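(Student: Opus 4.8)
The plan is to reduce the claim for $F_I + F_{I'}$ to the single-variable, single-factor situation and then apply (the proof of) Lemma~\ref{lem:scalar-mul}. First I would write out the gamma-factor of each of $F_I$ and $F_{I'}$ explicitly using \eqref{series-sol}. For every index $j \ne k$ the two functions share the identical factor $\Ga((-1)^{i_j}(1-c_j)) = \Ga(c_j - i_j)$ (up to the trivial relabelling $i_j \mapsto 1-i_j$ does not occur here since only the $k$-th entry differs), so the product $\prod_{j\ne k}(\ga_j - 1)$ multiplied against these factors is, by the computation in Lemma~\ref{lem:scalar-mul}, exactly what is needed to make each $F_{I}$ and $F_{I'}$ have a well-defined, generically nonzero limit in the variables $c_j$ ($j \ne k$) as $c_j \to p_j$. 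The only delicate point is that we are \emph{not} multiplying by $(\ga_k - 1)$, so the individual limits $\lim F_I$ and $\lim F_{I'}$ in the variable $c_k$ alone need not exist — each has a factor $\Ga(c_k - i_k)\Ga(1-(c_k-i_k)) = (-1)^{i_k}\pi/\sin(\pi c_k)$ which blows up as $c_k \to p_k \in \Z$. The content of the lemma is that these poles cancel in the \emph{sum}.

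So the heart of the argument is the following: after clearing the harmless factors $\prod_{j\ne k}(\ga_j-1)$ and the matching gamma-quotients, I would show that $F_I(x) + F_{I'}(x)$, viewed as a function of $c_k$ near $p_k$, extends holomorphically across $c_k = p_k$ and does not vanish identically there. Here $F_I$ carries the monomial prefactor $\prod_{j}x_j^{i_j(1-c_j)}$ with $i_k = 0$ (so no $x_k$-power from that slot), while $F_{I'}$ carries $x_k^{1-c_k}$ times the same other-slot monomials, and the two inner $F_C$-series have parameters $a^I,b^I,c^I$ versus $a^{I'},b^{I'},c^{I'}$ differing precisely by the $c_k \leftrightarrow 2-c_k$ reflection in the $k$-th slot (and $a^{I'} = a^I + (1-c_k)$, $b^{I'} = b^I + (1-c_k)$). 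This is exactly the classical one-variable connection relation between the two Frobenius solutions $x_k^{0}\cdot{}_2F_1$-type series and $x_k^{1-c_k}\cdot{}_2F_1$-type series at a regular singular point whose exponent difference $1-c_k$ becomes an integer: individually each Frobenius solution degenerates, but a suitable combination stays holomorphic and gives the (possibly logarithmic) second solution. I would make this precise by isolating the $c_k$-dependence: write the $\sin(\pi c_k)$ in the denominator of the prefactor of each term, expand the series coefficients $\Ga(\cdots + n_k)$ in $c_k$, and check that the residues at $c_k = p_k$ of $F_I$ and of $F_{I'}$ are negatives of each other — this is the Kummer-type relation $F_I(x) + F_{I'}(x)$ having no pole, which can be verified coefficientwise in the $x$-expansion, or invoked from the known structure of $E_C$ along $(x_k = 0)$ together with \eqref{irred-1} (which via \eqref{irred-3} guarantees $a^I,b^I\notin\Z$, so no further degeneration intrudes).

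For non-vanishing: the limit function is a solution of the limiting system $E_C(a,b,c)$ with $c_k = p_k$, and its leading behaviour as $x_k \to 0$ (with the other $x_j$ fixed small) is governed by the exponents $0$ and $1-p_k$ along $(x_k=0)$; the combination $F_I + F_{I'}$ picks out a genuine Frobenius/log-solution whose leading coefficient I can read off from the surviving term of the residue cancellation, and by \eqref{irred-1} this coefficient is a nonzero rational expression in $\al,\be,\ga$ (the factor $\Ga(1-a^I)\Ga(1-b^I)$ in the denominator is finite and nonzero). The main obstacle, I expect, is bookkeeping the two gamma-quotient prefactors carefully enough to see the residue cancellation cleanly — in particular tracking how $\Ga(1-a^I)\Ga(1-b^I)$ versus $\Ga(1-a^{I'})\Ga(1-b^{I'})$ differ by the reflection in $c_k$ and confirming these differences conspire with the $\pi/\sin(\pi c_k)$ factors so that the pole parts match with opposite sign; once that is in hand, well-definedness is immediate and non-vanishing follows from exhibiting one nonzero coefficient in the $x$-power-series of the limit.
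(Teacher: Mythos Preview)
Your plan is essentially the paper's own approach: isolate the $c_k$-dependence, observe that each of $F_I$ and $F_{I'}$ carries a simple pole of type $1/\sin(\pi c_k)$ whose residues cancel in the sum, and use the resulting $\log x_k$ term to certify non-vanishing. The paper executes the cancellation step by rewriting both series over a common index set (shifting $n_k' = n_k + 1 - c_k$) and applying l'H\^opital's rule, with a short case split on whether $p_k = 1$, $p_k \ge 2$, or $p_k \le 0$; this is the same computation as your coefficientwise residue check, just carried out explicitly. One small point: in the lemma as stated only $c_k$ is sent to an integer, so the prefactor $\prod_{j\ne k}(\ga_j-1)$ is a nonzero constant here rather than something needed to tame limits in the other $c_j$ --- your first paragraph slightly over-reads its role, but this does not affect the argument.
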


\begin{proof}
We show the case $m=1$ (we put $p_1=p$). Firstly, we assume $p=1$. 
By Example \ref{ex-series-1}, 
\begin{align*}
F_{0}(x)
%
&=\frac{s_0}{\sin(\pi c)}\sum_{n=0}^\infty
\dfrac{\Ga(a+n)\Ga(b+n)}{\Ga(c+n)\Ga(1+n)}x^n,\\
F_{1}(x)
&=-\frac{s_1(c)}{\sin(\pi c)}
\sum_{n'=1-c}^\infty \dfrac{\Ga(a+n')\Ga(b+ n')}
{\Ga(c+ n')\Ga(1+ n')}x^{n'},  
\end{align*}
where $n'=n+1-c$, $s_0=\dfrac{\sin(\pi a)\sin(\pi b)}{\pi}$, 
$s_1(c)=\dfrac{\sin(\pi(a-c))\sin(\pi(b-c))}{\pi}$. 
Both functions $\sin (\pi c) F_0(x)$ and $-\sin (\pi c) F_1(x)$ 
converge to 
$$
\sum_{n=0}^\infty
s_0\dfrac{\Ga(a+n)\Ga(b+n)}{\Ga(1+n)\Ga(1+n)}x^n
=\sum_{n=0}^\infty A_nx^n$$
as $c\to p=1$.
Apply l'H\^opital's rule to the function 
\begin{align*}
F_0(x)+F_1(x) 
=\dfrac{1}{\sin(\pi c)}\cdot
&\Bigg[\sum_{n=0}^\infty
\big(s_0\dfrac{\Ga(a+ n)\Ga(b+ n)}{\Ga(c+ n)\Ga(1+ n)}- A_n \big)x^n\\
& 
-\sum_{n'=1-c}^\infty 
\Big\{\big(
s_1(c)\dfrac{\Ga(a+n')\Ga(b+ n')}
{\Ga(c+ n')\Ga(1+ n')}- A_n\big)x^{n'}
- A_n(x^n-x^{n'}\big)\Big\}
\Bigg]
\end{align*}
to verify that its limit as $c\to p=1$ exists, where $n=n'+c-1$ 
in the second sum. Note that 
$\lim\limits_{c\to 1} \frac{A_n(x^n-x^{n'})}{\sin(\pi c)}$
yields the factor $\log x$.

Secondly, we assume $p\ge 2$.
In this case, the sum $\sum\limits_{n'=1-c}$ has negative terms 
for $n'=1-p,\dots,-1$ as $c\to p$.  
Since $\lim\limits_{c\to p} \dfrac{1}{\sin(\pi c)}\cdot\dfrac{1}{\Ga(n'+1)}$ 
converges to a non-zero value, these negative terms are well-defined.
Thus $F_0(x)+F_1(x)$ consists of these finite terms and 
the infinite sum considered in the case $p=1$. 

Thirdly, we assume $p\le 0$.
By regarding $n$ as $n+p$ in the sums of $F_0(x)$ and $F_1(x)$, 
we can show that 
$F_0(x)+F_1(x)$ is well-defined and not identically zero as in the previous 
consideration. 

For a general $m$, use a similar argument 
by regrading the variables except $x_k$ as constants. 
Note that the limit function has the factor $\log x_k$  coming from 
$\lim\limits_{n_k\to n'_k} \dfrac{x_k^{n_k}-x_k^{n'_k}}{n_k-n_k'}$.
\end{proof}

We define the tensor product $A\otimes B$ of matrices 
$A$ and $B=(b_{ij})_{\substack{1\le i\le r\\ 1\le j\le s}}$ as 
$$A\otimes B=\begin{pmatrix}
A\, b_{11} & A\, b_{12} &\cdots & A\, b_{1s} \\
A\, b_{21} & A\, b_{22} &\cdots & A\, b_{2s} \\
\vdots  &\vdots   &\ddots & \vdots  \\
A\, b_{r1} & A\, b_{r2} &\cdots & A\, b_{rs} \\
\end{pmatrix}.
$$
We remark that this is different from the usual definition. 
We fix the number $m$ of variables.
We set 
$$
G_k=\begin{pmatrix}
1 & 0 \\
0 & \ga_k^{-1}
\end{pmatrix},\quad 
Q_k=
\begin{pmatrix}
1- \ga_k& 1 \\
0 & 1 
\end{pmatrix},
$$
for $k=1,\dots, m$ and 
\begin{align*}
  P_m=Q_1\otimes Q_2\otimes \cdots \otimes  Q_m.
\end{align*}
By using these notations, 
the matrices $M_1,\dots ,M_m$ given in Fact \ref{fact:rep-mat-1}
is expressed as  
\begin{align*}
  M_k=E_2 \otimes \cdots \otimes E_2 \otimes \underset{k\textrm{-th}}{G_k}
  \otimes E_2 \otimes \cdots \otimes E_2 
  \quad (k=1,\ldots ,m)
\end{align*}
where $E_2$ is the unit matrix of size $2$. 
For example, 
we have 
$$
M_1=G_1=\begin{pmatrix}
1 & 0 \\
0 & \ga_1^{-1}
\end{pmatrix},\quad 
P_1=Q_1=
\begin{pmatrix}
1- \ga_1& 1 \\
0 & 1 
\end{pmatrix}
$$
in the case $m=1$, and 
\begin{align*}
  M_1=\begin{pmatrix}
    1 & 0 & 0 &0 \\
    0 & \ga_1^{-1}& 0 &0\\
    0 & 0 &1 & 0 \\
    0 & 0 &0 &\ga_1^{-1}
  \end{pmatrix},\quad 
  M_2=\begin{pmatrix}
    1 & 0 & 0 &0 \\
    0 & 1 & 0 &0\\
    0 & 0 &\ga_2^{-1} & 0 \\
    0 & 0 &0 &\ga_2^{-1}
  \end{pmatrix},
  \\
  P_2=Q_1\otimes Q_2=\begin{pmatrix}
    (1-\ga_1)(1-\ga_2)& 1-\ga_2 & 1-\ga_1 & 1 \\
    0                & 1-\ga_2 & 0       & 1 \\
    0                & 0       & 1-\ga_1 & 1 \\
    0                & 0       & 0       & 1 
  \end{pmatrix}
\end{align*}
in the case $m=2$. 
Note that 
$$
P_m=\begin{pmatrix}
P_{m-1}(1-\ga_m) & P_{m-1}\\
O & P_{m-1}\\
\end{pmatrix},
$$
where $O$ is the square zero matrix of size $2^{m-1}$.
We have 
\begin{align}
  \label{eq:detPm}
  \det(P_m)=\prod_{k=1}^m(1-\ga_k)^{2^{m-1}},
\end{align}
since $\det(P_1)=1-\ga_1$ and 
$$\det(P_m)=\det((1-\ga_m)P_{m-1})\det(P_{m-1})
=(1-\ga_m)^{2^{m-1}}\det(P_{m-1})^2.$$ 
We use a new basis given by 
$$
\tbfF(x)=(\dots, \tF_I(x),\dots)=\bfF(x)\cdot P_m.
$$
The vector-valued function $\tbfF(x)$ takes the form 
$$
\Big( (1-\ga_1)F_0(x),F_0(x)+F_1(x) \Big)
$$
for $m=1$, and the form 
\begin{align*}
  \Big( &(1-\ga_1)(1-\ga_2)F_{00}(x),
  (1-\ga_2)(F_{00}(x)+F_{10}(x)), \\
  &(1-\ga_1)(F_{00}(x)+F_{01}(x)),
  F_{00}(x)+F_{10}(x)+F_{01}(x)+F_{11}(x) \Big)
\end{align*}
for $m=2$.
\begin{Th}\label{th-sol-basis}
The vector-valued function $\tbfF(x)$ gives a basis of the space $\sol$ of the local solutions  
to $E_C(a,b,c)$ around $\dot x$ even in cases $c_k\in \Z$ $(k=1,\dots,m)$.
\end{Th}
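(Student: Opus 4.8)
The plan is to prove Theorem \ref{th-sol-basis} by induction on the number $m$ of variables, exploiting the tensor-product structure $P_m = Q_1 \otimes \cdots \otimes Q_m$ together with the block recursion
$$
P_m = \begin{pmatrix} P_{m-1}(1-\ga_m) & P_{m-1} \\ O & P_{m-1} \end{pmatrix}.
$$
The statement has two halves. First, one must check that every entry $\tF_I(x)$ of $\tbfF(x) = \bfF(x)\cdot P_m$ is a \emph{well-defined} holomorphic (or log-holomorphic) solution near $\dot x$ even when some $c_k\in\Z$; this is not automatic, since the individual $F_I$ acquire poles in the $\Ga$-factors. Second, one must show these $2^m$ functions remain \emph{linearly independent} in the limit, hence still form a basis of the rank-$2^m$ space $\sol$.

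First I would address well-definedness. The key input is Lemmas \ref{lem:scalar-mul} and \ref{lem:wa}: the former says $(\ga_1-1)\cdots(\ga_m-1)F_I(x)$ has a well-defined, not-identically-zero limit as $c\to p\in\Z^m$, and the latter says that forming the partial sum $F_I + F_{I'}$ over a single flipped coordinate $k$ removes the need for the factor $(\ga_k-1)$. I would observe that the entries of $P_m$ are precisely designed so that $\tF_J$ is a $\C$-linear combination of the $F_I$ in which, for each coordinate $k$, either the factor $(1-\ga_k)$ is present or the relevant coordinates have been summed in pairs. Concretely, by the tensor structure, if $J = (j_1,\dots,j_m)$ then $\tF_J$ carries the factor $\prod_{k:\,j_k=0}(1-\ga_k)$ and is summed over all coordinates $k$ with $j_k=1$; an induction on $m$ using the block recursion, combined with Lemma \ref{lem:wa} applied one coordinate at a time (treating the other variables as constants, exactly as in the proof of that lemma), shows each $\tF_J$ extends holomorphically — possibly with $\log x_k$ factors — across every divisor $c_k = p_k \in \Z$. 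The base case $m=1$ is already displayed: $\tbfF = ((1-\ga_1)F_0, F_0 + F_1)$, and both entries are well-defined by the two lemmas.

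Next I would handle linear independence of $\{\tF_J\}_J$ in the limit. Away from the integer loci, $\{F_I\}_I$ is a basis by hypothesis and $\det P_m = \prod_{k=1}^m(1-\ga_k)^{2^{m-1}}$ by \eqref{eq:detPm}, so $\{\tF_J\}_J$ is a basis there. The issue is only at $c_k \in \Z$, where $\det P_m$ vanishes. Here I would argue via the block recursion and induction: write the limiting Wronskian-type determinant (or simply the transition matrix to any fixed reference basis of $\sol$) and factor out the vanishing scalars. Since each $\tF_J$ has been normalized by multiplying through by exactly the $(\ga_k-1)$-factors that would otherwise blow up, the rescaled family has a finite, generically nonzero transition matrix; its determinant is holomorphic on a neighborhood of the integer locus and nonzero generically, hence — because $\sol$ has constant rank $2^m$ and the $\tF_J$ are genuine solutions — I must rule out that this determinant vanishes identically on a component $c_k = p_k$. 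This is where the log-terms enter crucially: the pair $\tF_{\ldots 0 \ldots}$ and $\tF_{\ldots 1 \ldots}$ differing in the $k$-th slot limit to functions one of which contains $\log x_k$ and the other does not, so they cannot become proportional; iterating this coordinate by coordinate via the tensor structure gives independence.

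The main obstacle will be the second half — making the linear-independence argument at the integer loci fully rigorous. Well-definedness is essentially bookkeeping on top of Lemmas \ref{lem:scalar-mul} and \ref{lem:wa}, but showing the limiting functions are still independent requires controlling the interaction of the $\log x_k$ factors across several coordinates simultaneously when many $c_k$ are integers at once. I would organize this by a careful induction on $m$ using the block form of $P_m$: the bottom block row gives $P_{m-1}$ acting on the "$i_m=1$" half, the top gives $(1-\ga_m)P_{m-1}$ plus $P_{m-1}$ mixing, and one checks that after extracting the common $(1-\ga_m)$-power the two halves remain independent because the $i_m=1$ solutions carry the extra $x_m^{1-c_m}$ (or $\log x_m$) behavior absent from the $i_m=0$ solutions. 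With the inductive hypothesis that $\tbfF^{(m-1)}$ is a basis for $m-1$ variables in all cases, this closes the induction.
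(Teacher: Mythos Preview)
Your proposal is correct and follows essentially the same route as the paper: well-definedness of each $\tF_J$ comes from applying Lemmas \ref{lem:scalar-mul} and \ref{lem:wa} coordinate-by-coordinate to cancel the poles, and linear independence at the integer loci comes from the fact that $\tF_I$ acquires a $\log x_k$ factor precisely when $i_k=1$ and $c_k\in\Z$. The paper's proof is more terse than yours---it simply lists the entries of $\tbfF$, observes how each is built from the two lemmas, and then dispatches linear independence in one line via the $\log x_k$ factors---so your detour through Wronskian-type determinants and your inductive framing via the block recursion of $P_m$ are valid organizational devices but not strictly needed.
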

\begin{proof}
The entries of $\tbfF(x)$ consist of 
\begin{align*}
\Big[\prod_{k=1}^m (1-\ga_k)\Big] &F_{0\dots 0}, \\
\Big[\prod_{1\le k\le m}^{k\ne i}(1-\ga_k)\Big] 
&(F_{0\dots 0}+F_{e_i}), \\
\Big[\prod_{1\le k\le m}^{k\ne i,j} (1-\ga_k)\Big] 
&(F_{0\dots 0}+F_{e_i}+F_{e_j}+F_{e_i+e_j}), 
\\
& \vdots \\ 
(1-\ga_k)&\sum_{I\in \ztwom} (1-i_k)F_{I}, \\
&\sum_{I\in \ztwom}F_{I},
\end{align*}
where $e_k$ is the $k$-th unit vector of size $m$ 
and $I=(i_1,\dots,i_m)$. 
The functions in the first and second lines are well-defined by Lemmas 
\ref{lem:scalar-mul} and \ref{lem:wa}.
Since the functions 
\begin{align*}
&\Big[\prod_{1\le k\le m}^{k\ne i}(1-\ga_k)\Big]
\big[
(F_{0\dots 0}+F_{e_i})+(F_{e_j}+F_{e_i+e_j})\big],\\
&\Big[\prod_{1\le k\le m}^{k\ne j}(1-\ga_k)\Big]\big[
(F_{0\dots 0}+F_{e_j})+(F_{e_j}+F_{e_i+e_j})\big]
\end{align*}
are well-defined by Lemma \ref{lem:wa}, the function 
$$
\Big[\prod_{1\le k\le m}^{k\ne i,j}(1-\ga_k)\Big]
(F_{0\dots 0}+F_{e_i}+F_{e_j}+F_{e_i+e_j})$$
is also well-defined. In this way, we can show that
the entries of $\tbfF(x)$ are well-defined 
even in cases $c_k\in \Z$. In the case $c_k\in \Z$, the functions $\tF_I(x)$ 
has the factor $\log x_k$, 
if $i_k=1$. 
This implies that $\tF_I(x)$'s are also linearly independent  
in such a case.
\end{proof}

\subsection{Representation matrices and the intersection matrix}
\label{section-rep-mat}
Next, we consider the representation matrices of $\CM_i$'s and 
the intersection matrix with respect to the new basis $\{\tF_I \}_I$. 

In the below discussion, we often use the following equality 
which is shown by a straightforward calculation:
\begin{align}
  \label{eq:sum}
  &\sum_{J\leq I} (-1)^{|J|}\left( \alpha \beta \prod_{k=1}^m \ga_k^{1-j_k} 
    + \prod_{k=1}^m \ga_k^{1+j_k} \right) \\
  \nonumber
  &=\left( \alpha \beta +(-1)^{|I|}\prod_{k=1}^m \ga_k^{i_k} \right)
  \prod_{k=1}^m \left( \ga_k^{1-i_k}(\ga_k-1)^{i_k} \right) , 
\end{align}
where $I=(i_1,\dots ,i_m)$, $J=(j_1,\dots ,j_m)$ and 
we define a partial order $\leq$ on $\ztwom$ by
$$
J\leq I \Longleftrightarrow j_k \leq i_k, \quad k=1,\dots , m.
$$
\begin{Cor}\label{cor:rep-mat}
  Let $\tM_i$ be the representation matrix of $\CM_i$ $(i=0, \dots ,m)$
  with respect to the basis $\{ \tF_I\}_I$. 
  For $k=1,\dots, m$, we have 
  \begin{align*}
    \tM_k =E_2 \ot \cdots \ot E_2 \ot \underset{k\textrm{-th}}{\tG_k} \ot E_2 \ot \cdots \ot E_2 ,
    \quad 
    \tG_k =\begin{pmatrix}1 & -\ga_k^{-1} \\ 0 & \ga_k^{-1} \end{pmatrix}.    
  \end{align*}
  $\tM_0$ is written as 
  \begin{align*}
    \tM_0 =E_{2^m} -N_0 ,\quad 
    N_0 =\tp (\zero, \dots ,\zero ,\bfv) ,
  \end{align*}
  where $\bfv \in \C^{2^m}$ is a column vector whose $I$-th entry is 
  \begin{align*}
    \left\{ 
      \begin{array}{ll}
        (-1)^m \frac{(\al-1)(\be-1)\prod_{k=1}^m \ga_k}{\al \be}& 
       (I=(0,\dots ,0)) ,\\
        (-1)^{m+|I|} \frac{(\al \be +(-1)^{|I|} \prod_{k=1}^m \ga_k^{i_k})
          \prod_{k=1}^m \ga_k^{1-i_k}}{\al \be}& (I\neq(0,\dots ,0)) .
      \end{array}
    \right.
  \end{align*}
Under the condition (\ref{irred-1}) without assuming  (\ref{c-cond}),
$\tM_0 ,\dots ,\tM_m$ are valid. 
\end{Cor}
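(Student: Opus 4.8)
The plan is to obtain the matrices $\tM_i$ from the matrices $M_i$ of Fact~\ref{fact:rep-mat-1} by the change of basis $\tbfF(x)=\bfF(x)\cdot P_m$, postponing the specialization $c_k\to p_k\in\Z$ to the very end. By definition of the representation matrices one has $(\rho_i)_*\bfF(x)=\bfF(x)\cdot M_i$, hence $(\rho_i)_*\tbfF(x)=\bfF(x)\cdot M_i\cdot P_m=\tbfF(x)\cdot\bigl(P_m^{-1}M_iP_m\bigr)$, so that $\tM_i=P_m^{-1}M_iP_m$ whenever (\ref{irred-1}), (\ref{c-cond}) and $\la\neq1$ hold, using (\ref{eq:detPm}) to see that $P_m$ is invertible. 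For $k=1,\dots,m$ I would write $P_m=Q_1\ot\cdots\ot Q_m$ and $M_k=E_2\ot\cdots\ot G_k\ot\cdots\ot E_2$ and use that the (non-standard) tensor product above still satisfies the mixed-product rule $(A_1\ot\cdots\ot A_m)(B_1\ot\cdots\ot B_m)=(A_1B_1)\ot\cdots\ot(A_mB_m)$; this gives $\tM_k=E_2\ot\cdots\ot(Q_k^{-1}G_kQ_k)\ot\cdots\ot E_2$, and a direct $2\times2$ computation, using $\frac{1-\ga_k^{-1}}{1-\ga_k}=-\ga_k^{-1}$, yields $Q_k^{-1}G_kQ_k=\tG_k$.

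For $\tM_0$ the second expression in Fact~\ref{fact:rep-mat-1} gives $\tM_0=E_{2^m}-c_0\,(P_m^{-1}\one)(\tp\one\cdot H\cdot P_m)$ with $c_0=\frac{(\be-1)(\al-\prod_{k=1}^m\ga_k)}{\al\be}$. Since $Q_k^{-1}\cdot\tp(1,1)=\tp(0,1)$ and $\one$ is the $m$-fold tensor power of $\tp(1,1)$, the vector $P_m^{-1}\one$ is the standard basis vector indexed by $(1,\dots,1)$; therefore $N_0$ is the matrix whose only nonzero row is the last one, namely $c_0\,\tp\one\cdot H\cdot P_m$, and it remains to identify this row. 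As $H$ is diagonal and $(P_m)_{J,I}=\prod_{k=1}^m(1-\ga_k)^{1-i_k}$ for $J\leq I$ and $0$ otherwise (with $J\leq I$ meaning $j_k\leq i_k$ for all $k$), the $I$-entry of $\tp\one\cdot H\cdot P_m$ equals $\prod_{k=1}^m(1-\ga_k)^{1-i_k}\sum_{J\leq I}H_{J,J}$. Substituting the explicit $H_{J,J}$ and multiplying out $(\al-\prod_k\ga_k^{j_k})(\be-\prod_k\ga_k^{j_k})$ (noting $\prod_k\ga_k^{1-j_k}\cdot\prod_k\ga_k^{j_k}=\prod_k\ga_k$) expresses $\sum_{J\leq I}H_{J,J}$ through the left-hand side of (\ref{eq:sum}) and the sum $\sum_{J\leq I}(-1)^{|J|}$, which equals $1$ for $I=(0,\dots,0)$ and $0$ otherwise. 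Applying (\ref{eq:sum}), and simplifying with $(1-\ga_k)^{1-i_k}(\ga_k-1)^{i_k}=(-1)^{1-i_k}(\ga_k-1)$ and $(-1)^{m-|I|}=(-1)^{m+|I|}$, produces exactly the two displayed values of the entries of $\bfv$ (for $I=(0,\dots,0)$ the value comes from the extra term, where $\al\be+1-\al-\be=(\al-1)(\be-1)$).

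Finally, to drop (\ref{c-cond}) and $\la\neq1$: the entries of $\tM_0,\dots,\tM_m$ are rational in $\al,\be,\ga_1,\dots,\ga_m$ with denominators involving only $\al\be$ and powers of the $\ga_k$, hence holomorphic in $a,b,c$ on the whole domain defined by (\ref{irred-1}), which is connected. By Theorem~\ref{th-sol-basis} the functions $\tF_I(x)$ form a basis of $\sol$ everywhere on this domain and depend holomorphically on $a,b,c$; consequently the circuit transformations $\CM_i$ and their matrices with respect to $\{\tF_I\}_I$ depend holomorphically on $a,b,c$ as well. The identities $(\rho_i)_*\tbfF(x)=\tbfF(x)\cdot\tM_i$ hold on the dense open subset where (\ref{c-cond}) and $\la\neq1$ are also satisfied, hence on all of (\ref{irred-1}) by the identity theorem, which is the assertion. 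I expect the bookkeeping in the $\tM_0$ computation — matching the triangular tensor structure of $P_m$ against the identity (\ref{eq:sum}) — to be the main technical step, with the holomorphic-dependence argument of the last paragraph the point that most needs to be made precise.
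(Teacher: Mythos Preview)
Your argument is correct and follows essentially the same route as the paper: the change of basis $\tM_i=P_m^{-1}M_iP_m$, the tensor factorisation reducing $\tM_k$ to the $2\times2$ identity $Q_k^{-1}G_kQ_k=\tG_k$, and the computation of the last row of $N_0$ via $P_m\ev=\one$ together with the identity~(\ref{eq:sum}) are exactly what the paper does. You add a little more than the paper spells out---the explicit description of the entries $(P_m)_{J,I}$, the observation that $\sum_{J\le I}(-1)^{|J|}$ kills the cross terms for $I\ne(0,\dots,0)$, and the analytic-continuation argument dropping (\ref{c-cond}) and $\la\ne1$---but these only make explicit what the paper leaves to the reader.
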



\begin{proof}
  By the definition of $\{ \tF_I\}_I$, we have $\tM_i =P_m^{-1} M_i P_m$. 
  The first claim follows from $Q_k^{-1} G_k Q_k =\tG_k$ and 
  the expressions of $M_k$ and $P_k$ as tensor products. 
  
  We show the second claim. 
  Note that 
  since all of the entries of the $2^m$-th column 
  (we also say the $(1,\dots ,1)$-th column)
  of $P_m$ are $1$, we have $P_m \ev =\one$ 
  and hence $\ev$ is an eigenvector of $\tM_0$ of eigenvalue 
  $(-1)^{m-1} \al^{-1} \be^{-1} \prod_k \ga_k$, 
  where $\ev =\tp (0,\dots ,0 ,1)\in \C^{2^m}$. 
  By $\tM_0 =P_m^{-1} M_0 P_m$ and Fact \ref{fact:rep-mat-1}, 
  $\tM_0 -E_{2^m}$ should be
  \begin{align*}
    &\frac{(\be-1)(\al -\prod_{k=1}^m \ga_k )}{\al \be} P_m^{-1} 
    \cdot \one \cdot \tp \one \cdot H P_m
    = \frac{(\be-1)(\al -\prod_{k=1}^m \ga_k )}{\al \be} \ev \tp \one H P_m \\
    &=\frac{(\be-1)(\al -\prod_{k=1}^m \ga_k )}{\al \be} 
    \tp (\zero, \dots ,\zero ,\bfh) \cdot P_m , 
  \end{align*}
  where $\bfh \in \C^{2^m}$ is a column vector whose $I$-th entry is $H_{I,I}$. 
  It is sufficient to show that 
  $$
  \frac{(\be-1)(\al -\prod_{k=1}^m \ga_k )}{\al \be}
  \tp \bfh \cdot P_m 
  =\tp \bfv .
  $$
  The $I$-th entry of the left-hand side is equal to  
  \begin{align*}
    &\frac{(\be-1)(\al -\prod_{k=1}^m \ga_k )}{\al \be}
    \sum_{J\leq I} \left( H_{J,J} \cdot \prod_{k=1}^m (1-\ga_k)^{1-i_k} \right) \\
    &=\frac{(-1)^{m}}{\al \be \prod_{k=1}^m (1-\ga_k)^{i_k}}
    \sum_{J\leq I} \left( (-1)^{|J|}
      \cdot \prod_{k=1}^m  \ga_k^{1-j_k}
      \cdot \Big( \al -\prod_{k=1}^m \ga_k^{j_k} \Big) 
      \Big(\be -\prod_{k=1}^m \ga_k^{j_k} \Big)  \right) .
  \end{align*}
  If $I=(0,\dots ,0)$, then this is the $(0,\dots ,0)$-th entry of $\bfv$. 
  If we assume $I\neq (0,\dots ,0)$, 
  then it equals to 
  \begin{align*}
    &\frac{(-1)^{m}}{\al \be \prod_{k=1}^m (1-\ga_k)^{i_k}}
    \sum_{J\leq I} (-1)^{|J|} 
    \left( \alpha \beta \prod_{k=1}^m \ga_k^{1-j_k} 
      + \prod_{k=1}^m \ga_k^{1+j_k} \right) \\
    &=\frac{(-1)^{m}}{\al \be \prod_{k=1}^m (1-\ga_k)^{i_k}}
    \left( \alpha \beta +(-1)^{|I|}\prod_{k=1}^m \ga_k^{i_k} \right)
    \prod_{k=1}^m \left( \ga_k^{1-i_k}(\ga_k-1)^{i_k} \right) 
  \end{align*}
  by (\ref{eq:sum}), 
  and this coincides with the $I$-th entry of $\bfv$. 
\end{proof}

\begin{Lem}
  \label{lem-indep}
  $2^m$ vectors 
  \begin{align*}
    \left( \prod_{k=1}^m \tM_k^{i_k} \right) \cdot \ev= \tM_1^{i_1} \tM_2^{i_2} \cdots \tM_m^{i_m}\ev 
    \quad (I= ( i_1 ,\ldots i_m ) \in \ztwom )
  \end{align*}
  are linearly independent. 
  In other words, actions $\CM_1,\dots,\CM_m$ on $f_0$ give a basis of the whole space $\sol$.  
\end{Lem}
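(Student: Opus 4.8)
The plan is to reduce the statement to the standard fact that tensor products of bases are bases. Using the (nonstandard) tensor-product convention of the paper, I would first identify $\C^{2^m}$ with $(\C^2)^{\ot m}$ so that the standard basis vector indexed by $I=(i_1,\dots,i_m)$ in pure lexicographic order corresponds to $u_{i_1}\ot\cdots\ot u_{i_m}$, where $u_0=\tp(1,0)$, $u_1=\tp(0,1)$; in particular $\ev$ corresponds to $u_1\ot\cdots\ot u_1$. Under this identification $\tM_k=E_2\ot\cdots\ot \tG_k\ot\cdots\ot E_2$ acts as $\tG_k$ on the $k$-th factor and as the identity on the others, so the $\tM_k$ commute (this is also part of Fact \ref{pi1}) and
\[
\Big(\prod_{k=1}^m \tM_k^{i_k}\Big)\ev
=(\tG_1^{i_1}u_1)\ot(\tG_2^{i_2}u_1)\ot\cdots\ot(\tG_m^{i_m}u_1).
\]

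Next I would record, for each $k$, that $\tG_k^0 u_1=u_1=\tp(0,1)$ and $\tG_k^1 u_1=\tG_k u_1=\tp(-\ga_k^{-1},\ga_k^{-1})$ are linearly independent in $\C^2$, since $\det\!\begin{pmatrix}0&-\ga_k^{-1}\\ 1&\ga_k^{-1}\end{pmatrix}=\ga_k^{-1}\neq 0$ (each $\ga_k$ is nonzero, being an exponential). Hence $\{\tG_k^0 u_1,\tG_k^1 u_1\}$ is a basis of the $k$-th factor $\C^2$. Consequently the $2^m$ vectors in the lemma are exactly all tensor products $\bigotimes_{k=1}^m v_k$ with each $v_k$ running over a basis of $\C^2$; equivalently, they are the columns of $B_1\ot\cdots\ot B_m$ with $B_k=(\,u_1\mid \tG_k u_1\,)\in GL_2$.

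Finally I would invoke that such tensor products form a basis of $(\C^2)^{\ot m}$. This follows exactly as in the computation of $\det(P_m)$ in \eqref{eq:detPm}: $\det(B_1\ot\cdots\ot B_m)=\prod_{k=1}^m(\det B_k)^{2^{m-1}}=\prod_{k=1}^m \ga_k^{-2^{m-1}}\neq 0$; alternatively, the span of all such tensor products already contains every $u_{i_1}\ot\cdots\ot u_{i_m}$, hence is all of $(\C^2)^{\ot m}$, a space of dimension exactly $2^m$, so the $2^m$ vectors must be linearly independent. The ``in other words'' part is then the translation back to $\sol$: since $\tbfF(x)=\bfF(x)P_m$ is a basis of $\sol$ by Theorem \ref{th-sol-basis}, $\ev$ represents $f_0=\sum_I F_I$, and $\tM_k$ is the matrix of $\CM_k$ with respect to $\{\tF_I\}_I$, the functions $\prod_k \CM_k^{i_k}(f_0)$ have coordinate vectors $\prod_k\tM_k^{i_k}\ev$ and are therefore a basis of $\sol$.

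I do not anticipate a genuine obstacle; the only care needed is the bookkeeping forced by the paper's transposed tensor convention — which tensor slot corresponds to which index $i_k$, and the order of factors. To avoid conventions entirely one could instead argue by induction on $m$ using the block forms $\tM_m=\bigl(\begin{smallmatrix}E_{2^{m-1}}&-\ga_m^{-1}E_{2^{m-1}}\\ O&\ga_m^{-1}E_{2^{m-1}}\end{smallmatrix}\bigr)$ and $\tM_k=\bigl(\begin{smallmatrix}\tM_k'&O\\ O&\tM_k'\end{smallmatrix}\bigr)$ for $k<m$, together with $\ev=\tp(0,\dots,0,1)$; splitting each target vector into its top and bottom halves of size $2^{m-1}$ reduces the linear independence of the $2^m$ vectors in dimension $2^m$ to that of the $2^{m-1}$ vectors one step down, by a one-line linear-algebra check.
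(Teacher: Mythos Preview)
Your proof is correct and follows essentially the same approach as the paper: both recognize the $2^m\times 2^m$ matrix of the vectors in question as a tensor product of $2\times 2$ matrices and compute its determinant to be $\prod_{k=1}^m \ga_k^{-2^{m-1}}\neq 0$. The only cosmetic difference is that the paper first conjugates back via $P_m^{-1}$ (using $\tM_k=P_m^{-1}M_kP_m$ and $\ev=P_m^{-1}\one$) to work with the diagonal $M_k$'s and the vector $\one$, thereby introducing and then cancelling a factor $\prod_k(1-\ga_k)^{\pm 2^{m-1}}$, whereas you stay with $\tG_k$ and $\ev$ directly and avoid that detour.
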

\begin{proof}
  It is sufficient to show that the $2^m \times 2^m$ matrix
  $$
  (\ev, \tM_1 \ev, \tM_2 \ev , \tM_1 \tM_2 \ev ,\tM_3 \ev ,\ldots , \tM_1 \cdots \tM_m \ev )
  $$
  is invertible. 
  We calculate its determinant. 
  Because of $\tM_k=P_m^{-1}M_k P_m$ and $\ev=P_m^{-1}\one$, this matrix equals to 
  \begin{align}
    \label{eq-basis-matrix}
    P_m^{-1} \cdot (\one, M_1 \one, M_2 \one, M_1 M_2 \one ,M_3 \one,\ldots ,M_1 \cdots M_m \one ) .
  \end{align}
  By the alignment of the indices set, the right side of this product is 
  \begin{align*}
    \begin{pmatrix}
      1 & 1 \\ 1 & \ga_1^{-1}
    \end{pmatrix} 
    \ot
    \begin{pmatrix}
      1 & 1 \\ 1 & \ga_2^{-1}
    \end{pmatrix} 
    \ot \cdots \ot
    \begin{pmatrix}
      1 & 1 \\ 1 & \ga_m^{-1} 
    \end{pmatrix} ,
  \end{align*}
  and its determinant is $\prod_{k=1}^m (\ga_k^{-1}-1)^{2^{m-1}}$. 
  By (\ref{eq:detPm}), the determinant of (\ref{eq-basis-matrix}) is equal to 
  $\prod_{k=1}^m \ga_k^{-2^{m-1}}$, 
  which is not zero. 
\end{proof}

\begin{Prop}
  \label{lem-intersection}
  Let $\tH=\tp P_m H P_m^{\vee}$, which represents  
  the intersection form $\CI$ with respect to the basis $\{ \tF_I\}_I$. 
  Then $\tH$ 
  is well-defined, and its determinant is 
  \begin{align*}
    \det(\tH)=\frac{1}{(\al -\prod _k  \ga_k )^{2^m}(\be -1)^{2^m}}
    \cdot \prod_{I\in \ztwom}\left(\al-\prod_{k=1}^m \ga_k^{i_k}\right)
    \left(\be-\prod_{k=1}^m \ga_k^{i_k}\right).
  \end{align*}
  In particular, $\tH$ is non-degenerate 
  even in cases $c_k\in \Z$ $(k=1,\dots,m)$.
\end{Prop}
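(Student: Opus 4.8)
The plan is to compute $\det(\tH)$ directly from the definition $\tH = \tp P_m \, H \, P_m^\vee$ together with the factorization of $H$ and $P_m$ into tensor products (in the nonstandard sense introduced before Fact~\ref{fact:rep-mat-1}), and then to check that the resulting rational function is regular and nonzero even when $c_k \in \Z$. First I would recall that $\det(P_m) = \prod_{k=1}^m (1-\ga_k)^{2^{m-1}}$ from \eqref{eq:detPm} and that $\det(P_m^\vee) = \prod_{k=1}^m (1-\ga_k^{-1})^{2^{m-1}}$ by applying the $\vee$-operation; hence
\begin{align*}
  \det(\tH) = \det(P_m)\det(P_m^\vee)\det(H)
  = \prod_{k=1}^m \bigl((1-\ga_k)(1-\ga_k^{-1})\bigr)^{2^{m-1}} \cdot \det(H).
\end{align*}
Since $H$ is diagonal (Fact~\ref{solution-cycle}(iii)), $\det(H) = \prod_{I\in\ztwom} H_{I,I}$, and I would substitute the explicit formula for $H_{I,I}$, splitting the product over $I$ into the three pieces appearing there: the factor $\prod_k \frac{(-1)^{i_k}\ga_k^{1-i_k}}{\ga_k-1}$, the numerator $\prod_I (\al - \prod_k\ga_k^{i_k})(\be-\prod_k\ga_k^{i_k})$, and the constant denominator $\bigl((\al-\prod_k\ga_k)(\be-1)\bigr)^{2^m}$.

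The main computational step is to evaluate $\prod_{I\in\ztwom} \prod_{k=1}^m \frac{(-1)^{i_k}\ga_k^{1-i_k}}{\ga_k-1}$ and show it cancels against $\prod_k((1-\ga_k)(1-\ga_k^{-1}))^{2^{m-1}}$, leaving only the claimed numerator and denominator. For each fixed $k$, as $I$ ranges over $\ztwom$ the exponent $i_k$ is $0$ for exactly $2^{m-1}$ indices and $1$ for the other $2^{m-1}$; so the $k$-th factor contributes $(-1)^{2^{m-1}} \cdot \ga_k^{2^{m-1}} \cdot (\ga_k-1)^{-2^m}$ (the sign is $+1$ since $2^{m-1}\geq 1$ for $m\geq 1$, and one should note the $m=1$ edge case where $2^{m-1}=1$). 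Multiplying across $k$ and combining with $\det(P_m)\det(P_m^\vee) = \prod_k (\ga_k-1)^{2^{m-1}}(\ga_k^{-1}-1)^{2^{m-1}} = \prod_k (\ga_k-1)^{2^m}\ga_k^{-2^{m-1}}(-1)^{2^{m-1}}$, the powers of $(\ga_k-1)$ cancel exactly and the powers of $\ga_k$ cancel exactly, so these "auxiliary" factors contribute $1$ overall. What remains is precisely
\begin{align*}
  \det(\tH) = \frac{1}{(\al-\prod_k\ga_k)^{2^m}(\be-1)^{2^m}}\prod_{I\in\ztwom}\Bigl(\al-\prod_{k=1}^m\ga_k^{i_k}\Bigr)\Bigl(\be-\prod_{k=1}^m\ga_k^{i_k}\Bigr),
\end{align*}
which is the stated formula.

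For well-definedness and non-degeneracy I would argue as follows. By Theorem~\ref{th-sol-basis} the functions $\tF_I$ form a basis of $\sol$ even when some $c_k\in\Z$, and by Corollary~\ref{cor:rep-mat} the circuit matrices $\tM_i$ are valid there; so $\CI$ restricted to this basis has a representation matrix which is a priori a meromorphic function of $c$ on a neighborhood of any integral point, obtained as the matrix of a monodromy-invariant pairing among honest solutions. The point is that the individual entries of $H$ have poles at $\ga_k=1$ (factors $(\ga_k-1)^{-1}$), but conjugating by $P_m$ and $P_m^\vee$, whose entries vanish to the appropriate orders at $\ga_k=1$ via the factors $1-\ga_k$, removes these poles; this is the same cancellation mechanism already used in Lemmas~\ref{lem:scalar-mul} and \ref{lem:wa}, so I would invoke those (or redo the elementary bookkeeping of pole/zero orders entry by entry) to conclude $\tH$ has holomorphic entries near every point with $c_k\in\Z$. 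Finally, the explicit determinant formula just computed is, by assumption \eqref{irred-2}/\eqref{irred-3}, a nonzero element of $\C(\al,\be,\ga)$: the numerator factors $\al-\prod_k\ga_k^{i_k}$ and $\be-\prod_k\ga_k^{i_k}$ are all nonzero by \eqref{irred-2}, and while the formula is written with a denominator, that denominator is cancelled by the $I=(0,\dots,0)$ and (after a moment's thought) another suitable index in the numerator product, so $\det(\tH)$ is a nonzero rational function that remains finite and nonzero at integral $c$ satisfying \eqref{irred-1}. Hence $\tH$ is non-degenerate even when $c_k\in\Z$. \textbf{The main obstacle} I anticipate is not the determinant bookkeeping, which is routine, but making the well-definedness argument clean: one must be slightly careful that $\tH$ genuinely extends holomorphically (not merely that its determinant does), which is why I would lean on the entrywise structure of $P_m$ established in the previous section rather than on an abstract limiting argument.
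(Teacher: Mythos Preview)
Your determinant computation is correct and follows the paper's approach exactly: both compute $\det(\tH)=\det(P_m)\det(H)\det(P_m^\vee)$ and simplify. (Your aside about the sign when $m=1$ is garbled --- $(-1)^{2^{m-1}}=-1$ there --- but the two such factors from $\det H$ and from $\det(P_m)\det(P_m^\vee)$ cancel regardless, so the end result is unaffected.)

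The well-definedness argument is where your proposal falls short of the paper's. The paper computes each entry $\tH_{I,I'}$ explicitly: since $H$ is diagonal and the $(J,I)$-entry of $P_m$ equals $\prod_k(1-\ga_k)^{1-i_k}$ whenever $J\le I$ (and $0$ otherwise), one obtains
\[
\tH_{I,I'}=\prod_{k}(1-\ga_k)^{1-i_k}(1-\ga_k^{-1})^{1-i'_k}\sum_{J\le I\cdot I'}H_{J,J},
\]
and then the identity (\ref{eq:sum}) collapses the inner sum to an expression with no $(\ga_k-1)$ in its denominator. None of your three suggested routes supplies this step. The appeal to a ``monodromy-invariant pairing among honest solutions'' is circular: $\CI$ was \emph{defined} via $H$ and the basis $\{F_I\}$, which requires $c_k\notin\Z$; extending it is precisely the content of the proposition. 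Citing Lemmas~\ref{lem:scalar-mul} and~\ref{lem:wa} is inapposite: those lemmas concern the analytic continuation of the series $F_I(x)$ and their pairwise sums, not the rational functions $H_{J,J}$. And naive pole/zero bookkeeping based only on the factors in $P_m$ and $P_m^\vee$ fails in the critical case $i_k=i'_k=1$, where the outer factor contributes \emph{no} zero at $\ga_k=1$; regularity there must come from alternating-sign cancellation inside $\sum_{J\le I\cdot I'}H_{J,J}$, and that cancellation is exactly what (\ref{eq:sum}) provides and what your sketch omits. You correctly flag this as the main obstacle, but the missing ingredient is the algebraic identity (\ref{eq:sum}), not anything in the earlier lemmas.

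A minor point: your claim that the denominator $(\al-\prod_k\ga_k)^{2^m}(\be-1)^{2^m}$ is cancelled by numerator factors is wrong --- only one power of each is absorbed (by $I=(1,\dots,1)$ and $I=(0,\dots,0)$ respectively). This does not affect the conclusion, since under (\ref{irred-2}) both $\al-\prod_k\ga_k$ and $\be-1$ are themselves nonzero.
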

\begin{proof}
  First, we show the well-definedness. 
  For $I=(i_1,\dots ,i_m)$, $I'=(i'_1,\dots ,i'_m)$, we put 
  $$
  I\cdot I'=(i_1 i'_1 ,\dots ,i_m i'_m )\in \ztwom .
  $$
  Since $H$ is diagonal, the $(I,I')$-entry of $\tH$ is  
  \begin{align*}
    &\tH_{I,I'} \\
    &=\prod_{k=1}^m (1-\ga_k)^{1-i_k} (1-\ga_k^{-1})^{1-i'_k}
    \sum_{J\leq I, J\leq I'} \left( \prod_{k=1}^m \frac{(-1)^{j_k} \ga_k^{1-j_k}}{\ga_k-1}
      \cdot \frac{( \al -\prod_{k=1}^m \ga_k^{j_k}) (\be -\prod_{k=1}^m \ga_k^{j_k} ) }
      {( \al -\prod_{k=1}^m  \ga_k ) (\be -1)} \right) \\
    &=\frac{\prod_{k=1}^m (-1)^{i'_k} \ga_k^{i'_k-1}(1-\ga_k)^{1-i_k-i'_k}}
    {( \al -\prod_{k=1}^m  \ga_k ) (\be -1)} 
    \sum_{J\leq I \cdot I'}  \left( (-1)^{|J|} \prod_{k=1}^m \ga_k^{1-j_k}
      \cdot \Big( \al -\prod_{k=1}^m \ga_k^{j_k} \Big) \Big(\be -\prod_{k=1}^m \ga_k^{j_k} \Big) \right) .
  \end{align*}
  If $I\cdot I'=(0,\dots ,0)$, then $1-i_k-i'_k \geq 0\ (k=1,\dots ,m)$, and hence
  \begin{align*}
    \tH_{I,I'}
    =\prod_{k=1}^m (-\ga_k)^{i'_k}(1-\ga_k)^{1-i_k-i'_k} 
    \cdot \frac{\al -1}{\al -\prod_{k=1}^m  \ga_k}  
  \end{align*}
  is well-defined. 
  If $I\cdot I' \neq (0,\dots ,0)$, the same calculation as the proof of 
  Corollary \ref{cor:rep-mat} shows 
  \begin{align*}
    &\tH_{I,I'}\\
    &=\frac{\prod_{k=1}^m (-1)^{i'_k} \ga_k^{i'_k-1}(1-\ga_k)^{1-i_k-i'_k}}
    {( \al -\prod_{k=1}^m  \ga_k ) (\be -1)} 
    \left( \alpha \beta +(-1)^{|I\cdot I'|}\prod_{k=1}^m \ga_k^{i_k i'_k} \right)
    \prod_{k=1}^m \left( \ga_k^{1-i_k i'_k}(\ga_k-1)^{i_k i'_k} \right) \\
    &=\frac{\alpha \beta +(-1)^{|I\cdot I'|}\prod_{k=1}^m \ga_k^{i_k i'_k}}
    {( \al -\prod_{k=1}^m  \ga_k ) (\be -1)}
    \cdot \prod_{k=1}^m (-\ga_k)^{i'_k(1-i_k)} (1-\ga_k)^{(1-i_k)(1-i'_k)}, 
  \end{align*}
  and we can see that 
  its denominator does not vanish.

  Next, we evaluate $\det (\tH)$. 
  Straightforward calculation and (\ref{eq:detPm}) show
  \begin{align*}
    \det (H) &= (-1)^{m 2^{m-1}} \cdot 
    \frac{\prod_{k=1}^m \ga_k^{2^{m-1}} 
      \cdot \prod_I( \al -\prod_{k=1}^m \ga_i^{i_k}) (\be -\prod_{k=1}^m \ga_i^{i_k} ) }
    {\prod_{k=1}^m (\ga_k -1)^{2^m} \cdot ( \al -\prod_{k=1}^m \ga_k )^{2^m} (\be -1)^{2^m}} , \\
    \det (P_m^{\vee})
    &=\prod_{k=1}^m \left( \ga_k^{-2^{m-1}}(\ga_k-1)^{2^{m-1}} \right) .
  \end{align*}
  We thus have 
  \begin{align*}
    \det (\tH )=\frac{1}{(\al -\prod_{k=1}^m \ga_k )^{2^m}(\be -1)^{2^m}}
    \cdot \prod_{I\in \ztwom}\left(\al-\prod_{k=1}^m \ga_k^{i_k}\right)
    \left(\be-\prod_{k=1}^m \ga_k^{i_k}\right) ,
  \end{align*}
  and it is not zero under the condition (\ref{irred-2}). 
\end{proof}
By this proposition, we can relax the condition to define 
the intersection from on $\sol$. 
\begin{Cor}
  By using $\{ \tF_I\}_I$ and $\tH$, the intersection form 
  $\CI :\sol \times \sol \to \C (\al ,\be ,\ga)$ 
  in Definition \ref{def-intersection-form}
  can be extended even in cases $c_k\in \Z$ $(k=1,\dots,m)$.
\end{Cor}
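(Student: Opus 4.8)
The plan is to \emph{define} the extended intersection form by decreeing $\tH$ to be its Gram matrix with respect to the basis $\{\tF_I\}_I$ of $\sol$. Concretely, given $F(x),G(x)\in\sol$, Theorem~\ref{th-sol-basis} lets us write $F(x)=\tbfF(x)\cdot\bff'$ and $G(x)=\tbfF(x)\cdot\bfg'$ with uniquely determined coordinate vectors $\bff',\bfg'$, even when some $c_k\in\Z$; one then sets $\CI(F(x),G(x))=\tp\bff'\cdot\tH\cdot(\bfg')^{\vee}$. Two things must be checked: that this assignment makes sense when $c_k\in\Z$, and that it recovers Definition~\ref{def-intersection-form} whenever the extra condition (\ref{c-cond}) also holds.

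The consistency check is immediate. When (\ref{c-cond}) holds, both $\{F_I\}_I$ and $\{\tF_I\}_I$ are bases of $\sol$, related by $\tbfF(x)=\bfF(x)\cdot P_m$; hence the coordinate vectors of a fixed $F(x)$ transform by $\bff=P_m\bff'$, and since $\tH=\tp P_m\,H\,P_m^{\vee}$ we get
\[
\tp\bff'\cdot\tH\cdot(\bfg')^{\vee}
=\tp\bff'\cdot\tp P_m\,H\,P_m^{\vee}\cdot(\bfg')^{\vee}
=\tp(P_m\bff')\cdot H\cdot(P_m^{\vee}(\bfg')^{\vee})
=\tp\bff\cdot H\cdot\bfg^{\vee},
\]
which is exactly $\CI(F(x),G(x))$ as in Definition~\ref{def-intersection-form}. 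For well-definedness when $c_k\in\Z$ (i.e.\ $\ga_k=1$) I would invoke Proposition~\ref{lem-intersection}: its proof exhibits every entry $\tH_{I,I'}$ as a rational function whose only possible denominators are $\al-\prod_k\ga_k$ and $\be-1$, both nonzero under (\ref{irred-2}) (take $I=(1,\dots,1)$ and $I=(0,\dots,0)$ respectively), so no factor $(1-\ga_k)$ survives in a denominator and $\tH$ specializes to an honest matrix; by the displayed formula $\det\tH\neq0$, so the specialized form is even non-degenerate. Combined with the basis property from Theorem~\ref{th-sol-basis}, this yields a well-defined bilinear form $\CI\colon\sol\times\sol\to\C(\al,\be,\ga)$ (with $\ga_k$ specialized to $1$ in the integer case).

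I would then remark that nothing further is needed: the corollary is a bookkeeping consequence of Theorem~\ref{th-sol-basis} and Proposition~\ref{lem-intersection}. If one wants the extended $\CI$ to keep its structural properties, one notes that the $\{\tF_I\}$-version of Corollary~\ref{cor-intersection-form}, namely $\tp\tM_\rho\cdot\tH\cdot\tM_\rho^{\vee}=\tH$, is an identity in $\C(\al,\be,\ga)$ between expressions — the matrices $\tM_i$ of Corollary~\ref{cor:rep-mat} and $\tH$ — all of which are already regular at $\ga_k=1$, so the identity survives specialization and the extended form remains monodromy invariant. The only mild subtlety, and the point I would be most careful about, is making sure the coordinate vectors $\bff',\bfg'$ really are well-defined in the integer case; this is precisely the linear independence of the $\tF_I$'s asserted at the end of the proof of Theorem~\ref{th-sol-basis}, and once it is granted the rest is formal.
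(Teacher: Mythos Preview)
Your proposal is correct and matches the paper's approach: the paper states this Corollary immediately after Proposition~\ref{lem-intersection} with only the sentence ``By this proposition, we can relax the condition to define the intersection form on $\sol$,'' leaving the details implicit. You have simply written out those details---defining $\CI$ via $\tH$ and the basis $\{\tF_I\}_I$, checking consistency with Definition~\ref{def-intersection-form} under (\ref{c-cond}) via the change-of-basis identity $\tH=\tp P_m H P_m^{\vee}$, and invoking Theorem~\ref{th-sol-basis} and Proposition~\ref{lem-intersection} for well-definedness---which is exactly the intended argument.
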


\begin{Lem}
  \label{lem-eigenspace}
  The eigenspace of $\tM_0$ with eigenvalue $1$ is expressed as
  \begin{align*}
    \ker N_0= \ker \tp \bfv .
  \end{align*}
\end{Lem}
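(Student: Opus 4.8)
The plan is to read the claim off directly from the description of $\tM_0$ in Corollary~\ref{cor:rep-mat}. By definition the eigenspace of $\tM_0$ with eigenvalue $1$ is $\ker(\tM_0 - E_{2^m})$; since Corollary~\ref{cor:rep-mat} gives $\tM_0 = E_{2^m} - N_0$, this kernel is exactly $\ker N_0$, which establishes the first equality.

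For the second equality I would simply unpack the notation $N_0 = \tp(\zero,\dots,\zero,\bfv)$. The matrix $(\zero,\dots,\zero,\bfv)$ has every column equal to $\zero$ except its last, which is $\bfv$; hence $N_0$, its transpose, has every row equal to $\zero$ except its last, which is the row vector $\tp\bfv$. Thus for any column vector $\bfw \in \C^{2^m}$ the product $N_0\bfw$ has all entries $0$ except the last one, which equals $\tp\bfv\cdot\bfw$. Therefore $N_0\bfw = \zero$ if and only if $\tp\bfv\cdot\bfw = 0$, i.e.\ $\ker N_0 = \ker\tp\bfv$, as claimed.

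I do not expect any genuine obstacle: the argument is nothing more than transcribing the shape of $N_0$, and the only point needing a moment's attention is that transposition converts the sole nonzero column of $(\zero,\dots,\zero,\bfv)$ into the sole nonzero row of $N_0$. It is worth adding, though, that under (\ref{irred-1}) the vector $\bfv$ is nonzero --- its $(0,\dots,0)$-th entry is $(-1)^m(\al-1)(\be-1)\prod_k\ga_k/(\al\be)$, which does not vanish because the instance $I=(0,\dots,0)$ of (\ref{irred-2}) forces $\al\neq 1$ and $\be\neq 1$ --- so $\ker\tp\bfv$ is a hyperplane and this eigenspace has dimension $2^m-1$, consistent with Remark~\ref{rem-eigen}(ii).
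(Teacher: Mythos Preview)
Your argument is correct and is exactly the approach the paper takes: it simply notes that the claim is obvious from the expression $\tM_0 = E_{2^m} - N_0 = E_{2^m} - \tp(\zero,\dots,\zero,\bfv)$, which is precisely what you have spelled out in detail. Your additional observation about the nonvanishing of the $(0,\dots,0)$-th entry of $\bfv$ under (\ref{irred-1}) and the resulting dimension $2^m-1$ is a nice bonus, consistent with Remark~\ref{rem-eigen}(ii).
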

\begin{proof}
  This is obvious because of the expression 
  \begin{align*}
    \tM_0 =E_{2^m}-N_0=E_{2^m}- \tp (\zero,\ldots ,\zero ,\bfv ) .
  \end{align*}
\end{proof}

\begin{Rem}
  If the eigenvalue $(-1)^{m-1}\al^{-1}\be^{-1}\prod_{k=1}^m\ga_k$ of $\tM_0$ coincides with $1$, 
  then the $(1,\dots ,1)$-th entry of $\bfv$ is zero and 
  $\ev$ also belongs to this eigenspace $\ker N_0$. 
\end{Rem}

\begin{Lem}
  \label{lem-orthogonal}
  $$
  \ker N_0 
  =\{ \bfw\in \C^{2^m} \mid \tp \bfw \tH \ev =0\}.
  $$ 
\end{Lem}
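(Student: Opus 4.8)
\emph{Proof plan.}
The plan is to realise both sides as the orthogonal complement, with respect to the standard bilinear pairing on $\C^{2^m}$, of one and the same line. On the left: since the only non-zero column of $(\zero,\dots ,\zero ,\bfv)$ is the last one, we have $N_0=\tp(\zero,\dots ,\zero ,\bfv)=\ev\cdot \tp\bfv$, so $N_0\bfw=(\tp\bfv\,\bfw)\,\ev$ and therefore $\ker N_0=\{\bfw\in\C^{2^m}\mid \tp\bfv\,\bfw=0\}$ (this is already Lemma \ref{lem-eigenspace}). On the right: $\tp\bfw\,\tH\,\ev$ is the scalar $\tp\bfw\cdot(\tH\ev)$, i.e.\ the standard pairing of $\bfw$ with the last column $\tH\ev$ of $\tH$, so $\{\bfw\mid \tp\bfw\,\tH\,\ev=0\}$ is the hyperplane orthogonal to $\tH\ev$. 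Hence it suffices to check that $\bfv$ and $\tH\ev$ are non-zero scalar multiples of one another.

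Next I would compute $\tH\ev$. The last column of $P_m$ has all entries equal to $1$ (this was used already to deduce $P_m\ev=\one$), hence so does the last column of $P_m^{\vee}$, i.e.\ $P_m^{\vee}\ev=\one$. Since $H$ is diagonal with $\bfh$ as its column of diagonal entries, this gives
\[
\tH\ev=\tp P_m\,H\,P_m^{\vee}\,\ev=\tp P_m\,(H\one)=\tp P_m\,\bfh .
\]
On the other hand, the computation carried out in the proof of Corollary \ref{cor:rep-mat} is exactly the identity $\tp\bfv=\dfrac{(\be-1)(\al-\prod_{k=1}^m\ga_k)}{\al\be}\,\tp\bfh\,P_m$; transposing it yields
\[
\bfv=\frac{(\be-1)(\al-\prod_{k=1}^m\ga_k)}{\al\be}\,\tp P_m\,\bfh
=\frac{(\be-1)(\al-\prod_{k=1}^m\ga_k)}{\al\be}\,\tH\,\ev .
\]
Under (\ref{irred-2}) the scalar $\dfrac{(\be-1)(\al-\prod_{k}\ga_k)}{\al\be}$ is non-zero (apply (\ref{irred-2}) with $I=(0,\dots ,0)$ and with $I=(1,\dots ,1)$, and note $\al\be\neq 0$), so $\bfv$ and $\tH\ev$ span the same line and the two hyperplanes coincide. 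If one wants the statement for $c_k\in\Z$ as well, note that both $\bfv$ (Corollary \ref{cor:rep-mat}) and $\tH\ev$ (Proposition \ref{lem-intersection}) are well-defined rational functions of $\al,\be,\ga$, so the displayed identity, being valid on the Zariski-dense locus where (\ref{c-cond}) holds, holds everywhere.

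I do not expect a genuine obstacle here: the only points requiring care are the bookkeeping of transposes and of the $(\ )^{\vee}$-operation in $\tH=\tp P_m H P_m^{\vee}$, and verifying that the proportionality constant is really non-zero under the running hypothesis (\ref{irred-1}). The substantive computation — expressing the last row of $\tM_0-E_{2^m}$ through $H$ and $P_m$ — has already been performed in Corollary \ref{cor:rep-mat}, and the present lemma is essentially a repackaging of it together with Lemma \ref{lem-eigenspace}.
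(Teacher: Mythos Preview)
Your argument is correct. You reduce the claim to the identity $\bfv=\dfrac{(\be-1)(\al-\prod_k\ga_k)}{\al\be}\,\tH\,\ev$, and verify it by combining $P_m^{\vee}\ev=\one$ with the computation $\tp\bfv=\dfrac{(\be-1)(\al-\prod_k\ga_k)}{\al\be}\,\tp\bfh\,P_m$ already carried out inside the proof of Corollary~\ref{cor:rep-mat}; the non-vanishing of the scalar under (\ref{irred-2}) then finishes the job, and your Zariski-density remark handles the degenerate values of $c_k$.

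The paper's own proof is different in spirit: it is a one-line appeal to Remark~\ref{rem-eigen}\,(ii), namely that the $1$-eigenspace of $\CM_0$ is exactly the $\CI$-orthogonal complement of $f_0$. Translating that statement to the basis $\{\tF_I\}_I$ (where $f_0$ has coordinate vector $\ev$ and $\CI$ is represented by $\tH$, with $\ev^{\vee}=\ev$) gives the lemma immediately. So the paper leans on the conceptual orthogonality of eigenspaces coming from the intersection form on twisted homology, whereas you unpack everything into an explicit linear-algebra identity. Your route is more self-contained and makes transparent exactly which earlier formula is being used; the paper's route is shorter and explains \emph{why} the equality should hold, but it presupposes that Remark~\ref{rem-eigen}\,(ii) (stated under (\ref{c-cond})) survives the passage to the new basis and to integer $c_k$---a point your continuity argument handles cleanly.
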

\begin{proof}
  This is also obvious because of the orthogonality of the eigenspaces (Remark \ref{rem-eigen} (ii))
  and the definition of $\tH$. 
\end{proof}

\section{Irreducibility}
We restate the main theorem and give its proof. 
\begin{Th}
  The monodromy representation 
  $$
  \CM : \pi_1 (X,\dot{x}) \to GL(\sol )
  $$
  is irreducible under the condition (\ref{irred-1}). 
\end{Th}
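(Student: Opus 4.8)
The plan is to work entirely with the new basis $\{\tF_I\}_I$ and the representation matrices $\tM_0,\dots,\tM_m$ from Corollary \ref{cor:rep-mat}, following the strategy already outlined in the introduction. Let $W\subset\sol$ be a nonzero $\CM$-invariant subspace; I must show $W=\sol$. Translating to coordinate vectors via the basis $\{\tF_I\}_I$, $W$ corresponds to a subspace of $\C^{2^m}$ invariant under $\tM_0,\tM_1,\dots,\tM_m$. Let $V=\ker N_0=\ker{}^t\bfv$ denote the $1$-eigenspace of $\tM_0$, which is a hyperplane by Lemma \ref{lem-eigenspace}. The argument splits into two cases according to whether $W\subset V$ or not.

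First I would treat the case $W\not\subset V$. Pick $\bfw\in W$ with ${}^t\bfv\cdot\bfw\neq 0$. Since $\tM_0\bfw=\bfw-N_0\bfw=\bfw-({}^t\bfv\cdot\bfw)\,\ev$ and $W$ is $\tM_0$-invariant, we get $\ev\in W$. Now invoke Lemma \ref{lem-indep}: the $2^m$ vectors $\tM_1^{i_1}\cdots\tM_m^{i_m}\ev$ ($I\in\ztwom$) are linearly independent, hence form a basis of $\C^{2^m}$; since $W$ is invariant under each $\tM_k$ and contains $\ev$, it contains all of them, so $W=\sol$.

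Next I would treat the case $W\subset V$. Here I want to derive a contradiction with $W\neq 0$ using the perfectness (non-degeneracy) of the intersection form $\tH$, established in Proposition \ref{lem-intersection}. Because $\CI$ is monodromy-invariant (Corollary \ref{cor-intersection-form}, which extends to the $\{\tF_I\}$ basis), the orthogonal complement $W^\perp$ with respect to $\CI$ is also $\CM$-invariant. By Lemma \ref{lem-orthogonal}, $V=\ker N_0$ equals the $\CI$-orthogonal complement of $\ev$, i.e.\ $\{\bfw\mid {}^t\bfw\,\tH\,\ev=0\}$; hence $W\subset V$ means $\ev\in W^{\perp}$ (more precisely $\ev$ is $\CI$-orthogonal to $W$). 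Then $W^\perp$ is a nonzero $\CM$-invariant subspace containing $\ev$, and not contained in $V$ — because ${}^t\ev\,\tH\,\ev=\tH_{(1,\dots,1),(1,\dots,1)}$, which by the formula in the proof of Proposition \ref{lem-intersection} equals $\dfrac{\alpha\beta+(-1)^m\prod_k\ga_k}{(\al-\prod_k\ga_k)(\be-1)}$ and is nonzero under (\ref{irred-2}); so $\ev\notin V$. By the first case applied to $W^\perp$, we conclude $W^\perp=\sol$, whence $W=(W^\perp)^\perp=0$ by non-degeneracy of $\tH$, contradicting $W\neq 0$.

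The main obstacle is the second case: one must be careful that the relevant non-degeneracy and orthogonality statements are genuinely available in the $\{\tF_I\}$ basis even when some $c_k\in\Z$ — this is exactly why Proposition \ref{lem-intersection} and Lemmas \ref{lem-eigenspace}, \ref{lem-orthogonal} were set up — and one must verify the key scalar ${}^t\ev\,\tH\,\ev\neq 0$, which places $\ev$ outside the $1$-eigenspace of $\tM_0$ and lets the bootstrap through $W^\perp$ run. A secondary point worth a sentence is that when $(-1)^{m-1}\al^{-1}\be^{-1}\prod_k\ga_k=1$ the vector $\ev$ lies in $\ker N_0$ (Remark after Lemma \ref{lem-eigenspace}); this does not affect the argument, since $\ev\in W$ (case 1) or $\ev\in W^\perp$ (case 2) is obtained directly and not via being a non-trivial eigenvector, but it should be acknowledged so the reader does not worry about it.
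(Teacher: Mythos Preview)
Your two-case split and Case~(i) match the paper exactly. The gap is in Case~(ii): you assert that the ``key scalar'' $\tH_{(1,\dots,1),(1,\dots,1)}=\dfrac{\al\be+(-1)^m\prod_k\ga_k}{(\al-\prod_k\ga_k)(\be-1)}$ is nonzero under (\ref{irred-2}), so that $\ev\notin V$ and Case~(i) applies to $W^\perp$. This is false: the paper explicitly notes right after (\ref{irred-2}) that $\al\be-(-1)^{m-1}\prod_k\ga_k=0$ is allowed, and then the numerator vanishes, $\ev\in V$, and your bootstrap through $W^\perp\not\subset V$ is blocked. Your closing remark misdiagnoses the issue --- the problem is not how $\ev$ is obtained, but that your chosen route to $W^\perp=\sol$ passes through $W^\perp\not\subset V$.

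The repair is to drop that detour: once $\ev\in W^\perp$ and $W^\perp$ is invariant under the circuit maps for $\rho_1,\dots,\rho_m$, Lemma~\ref{lem-indep} already forces $W^\perp=\C^{2^m}$ and hence $W=0$, with no need to revisit Case~(i). This is exactly what the paper does, only without naming $W^\perp$: for $\bfw\in W$ and each $I$ one has $(\prod_k\tM_k^{-i_k})\bfw\in W\subset\ker N_0$, so Lemma~\ref{lem-orthogonal} together with ${}^t\tM_k\,\tH\,\tM_k^\vee=\tH$ gives ${}^t\bfw\,\tH\,(\prod_k\tM_k^{i_k}\ev)^\vee=0$, and then Lemma~\ref{lem-indep} plus $\det\tH\neq0$ force $\bfw=0$. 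One further caveat on your formulation: because $\CI$ carries the $^\vee$-twist on its second argument, the orthogonal complement $W^\perp$ is invariant under the $\tM_k^\vee$ rather than the $\tM_k$; this is harmless (the proof of Lemma~\ref{lem-indep} works verbatim with $\ga_k\mapsto\ga_k^{-1}$), but it should be stated.
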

\begin{proof}
  By Theorem \ref{th-sol-basis}, 
  it is sufficient to consider the matrix representation by $\tM_i$ under 
  the isomorphism $\sol \simeq \C^{2^m}$. 
  Let $W\subset \C^{2^m}$ be an invariant subspace. 
  \begin{enumerate}
    \item[(i)] First, we suppose $W \not\subset \ker N_0$.  
      We take $\bfw\in W$ such that $N_0 \bfw \neq \zero$. 
      By the definition of $N_0$, the image of $N_0$ is spanned by $\ev$. 
      Thus $N_0 \bfw \neq \zero$ implies that 
      there exists $\mu \neq 0$ such that $N_0 \bfw =\mu \ev$. 
      We obtain 
      \begin{align*}
        \ev=\frac{1}{\mu}N_0 \bfw =\frac{1}{\mu} (\bfw-\tM_0 \bfw) \in W.
      \end{align*}
    By Lemma \ref{lem-indep}, the $2^m$ vectors
    \begin{align*}
      \left( \prod_{k=1}^m \tM_k^{i_k} \right) \cdot \ev\in W 
      \qquad (I= ( i_1 ,\ldots i_m ) \in \ztwom )      
    \end{align*}
    are linearly independent. 
    This implies $W=\C^{2^m}$. 
    \item[(ii)] Next, we suppose $W \subset \ker N_0$. 
      We fix an arbitrary $\bfw\in W$. 
      Since $W$ is an invariant subspace, we have
      \begin{align*}
        (\tM_1^{i_1})^{-1}(\tM_2^{i_2})^{-1}\cdots(\tM_m^{i_m})^{-1}\bfw \in W \subset \ker N_0 
      \end{align*}
      for any $I=(i_1 ,\ldots i_m ) \in \ztwom$.
      By the monodromy invariance $\tp \tM_i \tH \tM_i^{\vee}=\tH$ of 
      the intersection matrix $\tH$, 
      commutativity between $\tM_1, \tM_2,\ldots ,\tM_m$, and
      Lemma \ref{lem-orthogonal}, we obtain  
      \begin{align*}
        &\tp \bfw \tH (\tM_1^{i_1} \tM_2^{i_2} \cdots \tM_m^{i_m} \ev)^{\vee} \\
        &=\tp \bfw \tp ((\tM_1^{i_1})^{-1}(\tM_2^{i_2})^{-1}\cdots(\tM_m^{i_m})^{-1}) \tH \ev^{\vee} \\
        &=\tp ((\tM_1^{i_1})^{-1}(\tM_2^{i_2})^{-1}\cdots(\tM_m^{i_m})^{-1} \bfw) \tH \ev^{\vee} =0 .
      \end{align*}
      The linear independence of $\{\tM_1^{i_1} \tM_2^{i_2} \cdots \tM_m^{i_m}\ev\}_I$ and  
      $\det (\tH ) \neq 0$ (Proposition \ref{lem-intersection}) means that $\bfw=0$. 
      We thus have $W=0$. 
    \end{enumerate}
  Therefore, the invariant subspaces should be the trivial ones. 
\end{proof}

\section{Reducibility}\label{section-red}
Recall that our irreducibility assumption (\ref{irred-1}) consists of 
$2^{m+1}$ conditions for parameters. 
In this section, we show that if one of them is not satisfied
then the monodromy representation $\CM$ of $E_C(a,b,c)$ is reducible.
More precisely, we have the following theorem. 

\begin{Th}
\label{th-reducible}
Suppose that there exists $I=(i_1,\dots,i_m)\in \ztwom$ such that 
$a^I\in \Z$, $b^I\notin \Z$ or  $a^I\notin \Z$, $b^I\in \Z$. 
If $a^{I'},b^{I'}\notin \Z$ for any $I'\in \ztwom$ different from $I$,  
then the monodromy representation $\CM$ of $E_C(a,b,c)$ is reducible, that is, 
there exists a non-trivial subspace in $\sol$ invariant under $\CM$. 
\end{Th}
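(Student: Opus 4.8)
The plan is to exhibit an explicit monodromy-invariant subspace of $\sol$ when exactly one of the $2^{m+1}$ conditions in (\ref{irred-1}) fails, say the condition corresponding to the index $I_0\in\ztwom$ (so that, say, $a^{I_0}\in\Z$ but $b^{I_0}\notin\Z$, and $a^{I'},b^{I'}\notin\Z$ for all $I'\neq I_0$). The starting observation is that the formulas in Fact \ref{fact:rep-mat-1} and Corollary \ref{cor:rep-mat}, together with the intersection matrix $H$ of Fact \ref{solution-cycle}, are \emph{rational} in $\al,\be,\ga_1,\dots,\ga_m$. Hence I would work with the specialized data $\tM_0,\dots,\tM_m$ and $\tH$ of Corollary \ref{cor:rep-mat} and Proposition \ref{lem-intersection}: the matrices $\tM_k$ ($k\geq 1$) are always well-defined, $\tM_0=E_{2^m}-N_0$ is well-defined since its only possibly singular ingredient is the factor $\al\be$ in denominators (never zero), and $\tH$ is well-defined. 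What changes under the failure of the $I_0$-condition is that $\det(\tH)$ now vanishes (by the product formula in Proposition \ref{lem-intersection}), so the intersection form degenerates; and one factor $\al-\prod_k\ga_k^{i_k}$ or $\be-\prod_k\ga_k^{i_k}$ in the entries of $\bfv$ (hence of $N_0$) vanishes.

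The key step is to track how the proof of the main theorem breaks and to convert that breakage into an invariant subspace. Recall the dichotomy used there: an invariant $W$ either meets the complement of $\ker N_0$ (forcing $W=\C^{2^m}$ via Lemma \ref{lem-indep}), or lies in $\ker N_0$ and then, using $\tp\bfw\,\tH\,(\tM_1^{i_1}\cdots\tM_m^{i_m}\ev)^{\vee}=0$ for all $I$ together with $\det(\tH)\neq 0$, is forced to be zero. When $\det(\tH)=0$, the radical $\sol^{\perp}=\{\bfw\mid \tp\bfw\,\tH\,\bfg^{\vee}=0\ \forall\bfg\}$ is a nonzero proper subspace, and it is automatically monodromy-invariant by Corollary \ref{cor-intersection-form} (the monodromy preserves $\CI$, hence preserves its radical). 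So the first candidate for the invariant subspace is simply $\sol^{\perp}=\ker\tH$. I would check two things: first, that $\ker\tH\neq\{0\}$ — immediate from $\det(\tH)=0$; second, that $\ker\tH\neq\C^{2^m}$ — this needs a rank argument, e.g. that exactly one factor in the product formula vanishes to first order, so $\tH$ drops rank by exactly one (one can see this from the block-triangular structure of $P_m$ and the explicit entries $\tH_{I,I'}$, or by noting $\tH$ is a specialization of a matrix that is generically invertible and whose determinant has a simple zero along the hypersurface in question). That would already prove reducibility in this branch.

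The remaining subtlety is the case where $\ker\tH$ might conceivably be everything, or where one wants an invariant subspace of a prescribed form; here I would instead build the invariant subspace directly from the eigenstructure. Note $\operatorname{Im}N_0$ is spanned by $\ev$, so $\ev$ is the unique (up to scalar) line that can escape $\ker N_0$, and $\ker N_0=\ker\tp\bfv$ is a hyperplane. If $\bfv\neq\zero$ after specialization, then whenever the vanishing factor sits in the $I_0$-entry of $\bfv$ one still has a genuine hyperplane $\ker N_0$; if instead $\bfv=\zero$ (all entries vanish — which cannot happen when only one condition fails, since the $(0,\dots,0)$-entry is $(-1)^m(\al-1)(\be-1)\prod\ga_k/\al\be$, and more generally the entries for distinct $I$ involve distinct factors), then $\tM_0=E_{2^m}$ and every $\tM_k$-invariant subspace works, giving reducibility trivially. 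So I would split into: (a) $\bfv=\zero$, handled as above; (b) $\bfv\neq\zero$, in which case I claim the span of $\{\tM_1^{i_1}\cdots\tM_m^{i_m}\ev : I\neq I_0\}$ (a $(2^m-1)$-dimensional space), or equivalently $\ker\tH$, is $\CM$-invariant, proved by running the computation in part (ii) of the main-theorem proof in reverse: the vanishing of the single factor is exactly what makes one of the pairing conditions $\tp\bfw\,\tH\,(\tM_1^{i_1}\cdots\tM_m^{i_m}\ev)^{\vee}=0$ automatic, so the orthogonal complement of that deficient set is a proper nonzero invariant subspace.

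The main obstacle I anticipate is the bookkeeping that identifies \emph{which} vector (or which coordinate hyperplane in the $\ev$-orbit basis) becomes invariant, as a function of $I_0$ and of whether it is the $\al$-factor or the $\be$-factor that vanishes — i.e.\ pinning down $\ker\tH$ explicitly and showing it has dimension exactly $2^m-1$. This is the content I would prove carefully using the explicit entries of $\tH$ from Proposition \ref{lem-intersection} (which display each factor $\al\be+(-1)^{|I\cdot I'|}\prod\ga_k^{i_ki'_k}$ transparently); the rest is a straightforward specialization of the arguments already in the paper, with $\det(\tH)\neq 0$ replaced by a rank-one degeneracy.
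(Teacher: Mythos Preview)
Your approach via the radical of the degenerate intersection form is different from the paper's, and the underlying idea (the left null space of any $H'$ with $\tp M_\rho H' M_\rho^\vee=H'$ is $M_\rho$-invariant) is correct. There are, however, genuine gaps beyond bookkeeping. A minor one: $\tH$ is not always well-defined at the specialization, since its entries carry the denominator $(\al-\prod_k\ga_k)(\be-1)$ (see the proof of Proposition \ref{lem-intersection}), which vanishes when $I_0=(1,\dots,1)$ with the $\al$-condition failing or $I_0=(0,\dots,0)$ with the $\be$-condition failing; and you never check $c_k\notin\Z$, which the paper deduces from the uniqueness of $I_0$ and which is needed for the $F_I$ even to make sense.

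The serious gap is that you assume $\{\tF_I\}$ remains a basis of $\sol$ at the specialized parameters, so that the $\tM_i$ actually represent the monodromy on $\sol$ rather than merely on $\C^{2^m}$. Theorem \ref{th-sol-basis} is proved under (\ref{irred-1}), which is precisely what fails here. Concretely, when $a^{I_0}$ is a positive integer the factor $1/\Ga(1-a^{I_0})$ in (\ref{series-sol}) vanishes and $F_{I_0}\equiv 0$; since $P_m$ is invertible (as $\ga_k\neq 1$), the $\tF_I$ span the same $(2^m-1)$-dimensional subspace as the $F_I$ and do not form a basis of $\sol$. In that regime your one-dimensional radical in $\C^{2^m}$ maps to the zero function in $\sol$ and yields no nontrivial invariant subspace. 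The paper handles this by a case split on the sign of the integer $a^{I_0}$: if $a^{I_0}$ is negative, $F_{I_0}$ survives as a nonzero polynomial times $\prod_k x_k^{i_k(1-c_k)}$ and its line is invariant; if $a^{I_0}$ is non-negative, one passes to the renormalized basis $F'_J=F_J/H_{J,J}$ (well-defined in the limit $a^{I_0}\to L'$), with respect to which the monodromy matrices are the transposes $\tp M_i$, and the span of $\{F'_{I'}:I'\neq I_0\}$ is a $(2^m-1)$-dimensional invariant subspace. Your proposal treats this as bookkeeping, but it is the substantive step.
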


\begin{proof}
We fix $I=(i_1,\dots,i_m)\in \ztwom$ and 
assume that $a^I\in \Z,\ b^I\notin \Z.$
If there exists $j$ such that $c_j\in \Z$ then 
$$
\left\{
\begin{matrix}
  a^{I+e_j}=a^I+ (1-c_j)\in \Z& \textrm{if} & i_j=0,\\
  a^{I-e_j}=a^I- (1-c_j)\in \Z& \textrm{if} & i_j=1,
\end{matrix}
\right.
$$
which contradicts to the assumption.
Thus we have $c_1,\dots,c_m\notin \Z$ and 
the solutions $F_{I'}(x)$ in (\ref{series-sol}) for $I'(\ne I)$ are valid. 
Note that these $2^m-1$ solutions are linearly independent.

Hereafter, we regard $a^I$ as an indeterminant, 
and consider two cases:
\begin{enumerate}[(i)]
\item $a^I$ approaches to a non-positive integer $-L$ 
  ($L\in \{ 0,1,2,\dots \}$);
\item $a^I$ approaches to a positive integer $L'$ 
  ($L'\in \{ 1,2,3,\dots \}$). 
\end{enumerate}
To prove the reducibility, we find a non-trivial
invariant subspace in each case.
\begin{enumerate}[(i)]
\item 
Note that the solution $F_{I}(x)$ in (\ref{series-sol}) for this $I$
is expressed as a non-zero constant multiple of 
$$
\lim_{a^I\to -L} \sin(\pi a^I)\sum_{(n_1',\dots,n_m')}
\frac{\Ga(a+n'_1+\cdots+ n'_m)\Ga(b+n'_1+\cdots+ n'_m)}
{\Ga(c_1\!+\!n_1')\cdots \Ga(c_m\!+\!n_m')\Ga(1\!+\!n_1')\cdots 
\Ga(1\!+\!n_m')}x_1^{n_1'}\cdots x_m^{n_m'},
$$
where $n_k'$ $(1\le k\le m)$ runs over the set 
$$
\left\{
\begin{array}{lcc}
\N=\{n_k\mid n_k\in \N\}=\{0,1,2,\dots\} & \textrm{if} & k\notin I,\\
1-c_k+\N=\{1-c_k+n_k\mid n_k\in \N\} & \textrm{if} & k\in I.
\end{array}
\right.
$$
Since 
$$a+n'_1+\cdots+ n'_m=a^I+n_1+\cdots+n_m=-L+n_1+\cdots+n_m\le 0
$$
for $n_1+\cdots +n_m\le L$ when $a^I\to -L$, 
some finite terms of $\Ga(a+n'_1+\cdots+ n'_m)$ diverge.  
However, the poles of the Gamma function are simple, 
these poles are canceled by the limit of $\sin(\pi a^I)$ 
as $a^I\to -L$.  
Hence the solution $F_I(x)$ is reduced to the sum of finite terms 
by the limit $a^I\to -L$.  Since $F_I(x)$ is a polynomial times 
$\prod_{k=1}^m x_k^{i_k(1-c_k)}$, 
the $1$-dimensional span of $F_I(x)$ in $Sol_{\dot x}$ 
is invariant under $\CM$. Therefore the monodromy representation $\CM$ is 
reducible in this case. 
Note that the representation matrices $M_i$ $(i=0,1,\dots,m)$ in 
Fact \ref{fact:rep-mat-1} are 
valid under the limit and the $I$-th column of $M_0$ is 
the $I$-th unit column vector of size $2^m$. We can also see that 
the $1$-dimensional span of $F_I(x)$ is invariant under $\CM$ 
by these representation matrices.
\item
If the parameter $a^I$ goes to a positive integer $L'$, 
then the solution $F_{I}(x)$ in (\ref{series-sol}) for this $I$
reduces to the identically zero. 
Thus we use the fundamental system 
$(\dots, F_I'(x),\dots)=(\dots, F_I,\dots)H^{-1}$, 
where the diagonal matrix $H$ is 
given in Fact \ref{solution-cycle}. 
By the explicit form of $H$, we can easily see that 
$F_{I'}'(x)$ for $I'\ne I$ are valid under the limit $a^I \to L'$.
The limit $\lim\limits_{a^I \to L'} F_I'(x)$ 
is a non-zero constant multiple of 
$$
\lim_{a^I\to L'} \sum_{(n_1',\dots,n_m')}
\frac{\Ga(a+n'_1+\cdots+ n'_m)\Ga(b+n'_1+\cdots+ n'_m)}
{\Ga(c_1+n_1')\cdots \Ga(c_m+n_m')\Ga(1+n_1')\cdots \Ga(1+n_m')}
x_1^{n_1'}\cdots x_m^{n_m'},
$$
where $n_k'$ $(1\le k\le m)$ runs over the same set as (i).
Since each term of this series converges as $a^I\to L'$, 
this limit is a solution to $E_C(a,b,c)$ with 
the factor $\prod_{k=1}^m x_k^{i_k(1-c_k)}$.
Thus the fundamental system $(\dots, F_I'(x),\dots)$ is valid 
under the limit $a^I \to L'$.
By this change of fundamental systems, the representation matrices 
$M_i$ $(i=0,1,\dots,m)$ are transformed into 
$$M'_i=H M_i H^{-1}.
$$
For $k=1,\dots,m$, 
since $M_k$ and $H$ are diagonal, we have 
$M'_k=H M_k H^{-1}=M_k=\tp M_k$. 
By Fact \ref{fact:rep-mat-1}, $M_0'$ is given as
\begin{align*}
M'_0=HM_0H^{-1}&=
H\Big(E_{2^m} -\frac{(\be-1)(\al -\prod_{k=1}^m \ga_k )}{\al \be} 
  \cdot \one \cdot \tp \one \cdot H\Big)H^{-1}\\
&=E_{2^m} -\frac{(\be-1)(\al -\prod_{k=1}^m \ga_k )}{\al \be} 
  \cdot H\cdot \one \cdot \tp \one = \tp M_0.
\end{align*}
These representation matrices are valid under the limit $a^I\to L'$.
By this limit, the $I$-th row of $M'_0$ is the $I$-th 
unit row vector of size $2^m$. 
Hence the $(2^m-1)$-dimensional space spanned by $F'_{I'}$ $(I'\ne I)$ 
is invariant under $\CM$. 
\end{enumerate}
Therefore, we obtain non-trivial invariant subspaces, and 
complete the proof.  
\end{proof}

\begin{Rem}
Even in the case of $m=1$, we need detailed case analysis
to give a fundamental system of solutions to $E_C(a,b,c)$ 
in terms of the series (\ref{series-sol})
without the condition (\ref{irred-1}), refer to \cite{KS} and \cite{MS}.
\end{Rem}

\end{document}